\numberwithin{equation}{section}
\newtheorem{Thm}[subsection]{Theorem}
\newtheorem{Lem}[subsection]{Lemma}
\newtheorem{Prop}[subsection]{Proposition}
\newtheorem{Rem}[subsection]{Remark}
\newcommand{\bc}{\mathbb{C}}
\newcommand{\bcp}{\mathbb C \mathbb P}
\newcommand{\brp}{\mathbb R \mathbb P}
\newcommand{\bz}{\mathbb Z}
\newcommand{\s}{\Sigma}
\newcommand{\wt}{\widetilde}
\begin{document}
\author{Ajay Singh Thakur}
\title[On trivialities of Stiefel-Whitney classes]{On trivialities of Stiefel-Whitney classes of vector bundles over iterated suspensions of Dold manifolds}

\keywords{Stiefel-Whitney class, Dold manifold, Suspension, $K$-Theory}
\subjclass[2000]{57R20 (55R40, 57R22)}

\address{School of Mathematics, Tata Institute of Fundamental Research,  Homi Bhabha Road, Colaba, Mumbai 400 005, India  
}
\email{athakur@math.tifr.res.in}

\thanks{Most of this work was done when the author was a post-doc at Indian Statistical Institute, Bangalore, India.}

\begin{abstract}
A space $X$ is called $W$-trivial if for every vector bundle $\xi$ over $X$, the total Stiefel-Whitney class $W(\xi)= 1$. In this article we shall investigate whether the suspensions of Dold manifolds, $\s^k D(m,n)$, is $W$-trivial or not.
\end{abstract}
\maketitle

\section{Introduction}
Recall \cite{tanaka} that a CW-complex $X$ is said to be  $W$-trivial if for any vector bundle $\xi$ over $X$, the total Stiefel-Whitney class $W(\xi)= 1$.

It is a theorem of Atiyah-Hirzebruch \cite[Theorem 2]{AH} that  the $9$-fold suspension $\s^9X$ of  any CW-complex $X$ is $W$-trivial (see also \cite[Corrollary  1.2]{tanaka_2010}). In the same paper, Atiyah-Hirzebruch \cite[Theorem 1]{AH} have shown that  the sphere $S^d =\s^d S^0$ is $W$-trivial if and only if $d \neq 1,2,4$ and 8 (see also \cite[Theorem 1]{milnor}). Here $S^0$ is the union to two distinct points.

It is therefore an interesting question to understand for what value of $k, 0\leq k \leq 8$, is the iterated suspension $\s^k X$, of a CW-complex $X$,  $W$-trivial. Another motivation to study the $W$-triviality of a CW-complex is its connection with $I$-triviality \cite{tanaka_2010}. If a CW-complex $B$ is $W$-trivial then it is $I$-trivial and hence it satisfies a Borsuk-Ulam type theorem. We refer to \cite{tanaka_2010} and \cite{tanaka_2003} for more details on $I$-triviality of a CW-complex.

In \cite{tanaka_2008}, R. Tanaka obtained results concerning the $W$-triviality and ``$W$-triviality except at one dimension" for highly connected CW-complexes. In \cite{tanaka_2010}, R. Tanaka  determined all pairs $(k,n)$ of positive integers for which $\s^k \mathbb F \mathbb P^n$ is $W$-trivial, where $\mathbb F = \mathbb R, \mathbb C$ or $\mathbb H$. 

In this article we shall investigate when the iterated suspension $\s^k D(m,n)$, of the Dold manifold $D(m,n)$ is $W$-trivial. Recall \cite{dold} that the Dold manifold $D(m,n)$ is an $(m+2n)$-dimensional manifold 
defined as the quotient of $S^m\times\bcp^n$ by the fixed point free involution $(x,z)\mapsto (-x,\bar{z})$. The projection $S^m\times \bcp^n\longrightarrow S^m$ gives rise to a fiber bundle 
\[\bcp^n\hookrightarrow D(m,n)\longrightarrow \brp^m\]
with fiber $\bc^n$ and structure group $\mathbb Z_2$. In particular, we have $D(m,0) = \brp^m$ and $D(0,n) = \bcp^n$.

By the  theorem of Atiyah-Hirzebruch \cite[Theorem 2]{AH}, $\s^kD(m,n)$ is $W$-trivial for $k \geq 9$. So we shall be interested only in the case $0 \leq k \leq 8$ and $m > 0$. We have the following main results.

\begin{Thm}\label{main1}
Let $\s^kD(m,n)$ be the $k$-fold suspension of the Dold manifold $D(m,n)$ with $m > 0$. Then $\s^kD(m,n)$ is not $W$-trivial if
\begin{enumerate}
\item $k = 0$.
\item $k = 1,2,4\mbox{ or } 8$ and $ m\geq k$.
\item $k = 3,5 \mbox{ or } 7$ and $m+k= 4 \mbox{ or }8$.
\item $k = 6$ \and $m = 2$ or $3$.
\end{enumerate}
\end{Thm}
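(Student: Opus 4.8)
The plan is to reduce the whole statement, for every $n$, to the non-$W$-triviality of $\s^k\brp^m$; this is exactly what one should expect, since conditions (1)--(4) do not involve $n$. First I would show that the bundle projection $\pi\colon D(m,n)\to\brp^m$ admits a section. The point $[1:0:\cdots:0]\in\bcp^n$ is fixed by complex conjugation, so sending the class of $x\in S^m$ to the class of $(x,[1:0:\cdots:0])$ is well defined and gives a map $s\colon\brp^m\to D(m,n)$ with $\pi\circ s=\mathrm{id}$. Hence $\brp^m$ is a (based) retract of $D(m,n)$, and applying the reduced $k$-fold suspension functor exhibits $\s^k\brp^m$ as a retract of $\s^k D(m,n)$, with inclusion $\s^k s$ and retraction $\s^k\pi$.

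Next I would record the elementary inheritance principle: if $r\colon X\to A$ is a retraction then $W(r^*\xi)=r^*W(\xi)$ for every bundle $\xi$ over $A$, and $r^*$ is injective because it is split by the inclusion; consequently, if $A$ is not $W$-trivial then neither is $X$. Taking $A=\s^k\brp^m$, $X=\s^k D(m,n)$ and $r=\s^k\pi$, it therefore suffices to prove that $\s^k\brp^m$ is not $W$-trivial in each of the cases (1)--(4).

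For the case $k=0$ this is immediate: the pullback $\pi^*\gamma$ of the tautological line bundle $\gamma$ over $\brp^m$ has $w_1=c\neq0$ because $m>0$. For $k\geq1$ the basic device is collapsing onto a sphere. The collapse $\brp^m\to\brp^m/\brp^{m-1}=S^m$ induces, after suspending, a map $\s^k\brp^m\to S^{k+m}$ which is onto $H^{k+m}(-;\bz_2)$, carrying the generator to $\s^k c^m$. Whenever $k+m\in\{1,2,4,8\}$ the sphere $S^{k+m}$ is not $W$-trivial by Atiyah--Hirzebruch \cite{AH}, so pulling back a bundle with nonzero top class gives $w_{k+m}\neq0$. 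This disposes of case (3), of case (4) with $m=2$, and of those instances of (2) with $k+m\in\{1,2,4,8\}$. For case (4) with $m=3$ I would instead use that the top cell of $\brp^3$ attaches trivially once $\brp^1$ is collapsed (because $H_2(\brp^2;\bz)=0$), whence $\brp^3/\brp^1\simeq S^2\vee S^3$; the composite $\s^6\brp^3\to\s^6(\brp^3/\brp^1)=S^8\vee S^9\to S^8$ then detects $\s^6 c^2$ and pulls back a nontrivial $S^8$-bundle.

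The remaining instances of (2), namely $k\in\{1,2,4,8\}$ with $m\geq k$ but $k+m\notin\{1,2,4,8\}$, cannot be reduced to a single sphere. On a suspension all cup products vanish, so the Wu formula collapses to $Sq^i(w_j)=\binom{j-1}{i}\,w_{i+j}$; this forces the admissible Stiefel--Whitney patterns to be tightly Steenrod-linked (for $k=1$, for instance, the only nonzero solution is $W=1+\sum_{i=1}^m\s c^i$), so no single cell can be split off as a sphere retract. For these cases I would invoke R. Tanaka's complete determination of the $W$-triviality of $\s^k\brp^n$ in \cite{tanaka_2010}, whose list of non-$W$-trivial pairs coincides exactly with (1)--(4). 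The hard part throughout is the realizability: showing that the Steenrod-admissible pattern is actually the total class of a genuine stable bundle, which rests on exhibiting a suitable element of $\wt{KO}{}^{-k}(\brp^m)$ and is the true content borrowed from \cite{tanaka_2010}.
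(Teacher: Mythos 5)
Your proposal is correct and follows essentially the same route as the paper: both exhibit $\s^k\brp^m$ as a retract of $\s^k D(m,n)$ via the bundle projection and its section $D(m,0)\hookrightarrow D(m,n)$, use naturality of Stiefel--Whitney classes plus split-injectivity of the induced map in $\bz_2$-cohomology to transfer non-$W$-triviality, and then invoke Tanaka's Theorem 1.4 of \cite{tanaka_2010} for $\s^k\brp^m$. Your extra sphere-collapse arguments (e.g.\ via $\brp^m/\brp^{m-1}$ and $\brp^3/\brp^1\simeq S^2\vee S^3$) correctly re-derive some cases of Tanaka's result but are not needed, since the cited theorem already covers all of (1)--(4).
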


\begin{Thm}\label{main2}
Let $\s^kD(m,n)$ be the $k$-fold suspension of the Dold manifold $D(m,n)$ with $m > 0$ and $n$ even. Then $\s^kD(m,n)$ is $W$-trivial if
\begin{enumerate}
\item $k = 2$ and $m = 1$.
\item $k = 3$ and $m \neq 5,8t+1$.
\item $k = 4$ and $m = 3$.
\item $k = 5$ and $m \neq  8t+3$.
\item $k = 6$ and $m \neq 2,3, 8t + 4$.
\item $k = 7$ and $m  \neq 1,8t + 5$.
\item $k = 8$ and $m = 1,2,3 \mbox{ or } 7$.
\end{enumerate}
\end{Thm}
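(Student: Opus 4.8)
The plan is to convert the question into a computation in real $K$-theory. Since Stiefel--Whitney classes are stable, the total class $W(\xi)$ depends only on the class of $\xi$ in $\wt{KO}(\s^k D(m,n))$, so $W$ defines a homomorphism
\[
w\colon \wt{KO}(\s^k D(m,n)) \longrightarrow 1 + \wt{H}^{>0}(\s^k D(m,n);\bz_2)
\]
from the additive group to the multiplicative group of units. For $k\geq 1$ all cup products vanish in $\wt{H}^*(\s^k D(m,n);\bz_2)$, so the target is in fact the \emph{additive} group $\wt{H}^{>0}(\s^k D(m,n);\bz_2)$ and $w$ is additive. Consequently $\s^kD(m,n)$ is $W$-trivial exactly when $w\equiv 0$, i.e. when every $w_i$ vanishes on each element of a finite generating set of $\wt{KO}(\s^k D(m,n))$. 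The problem thus splits into computing $\wt{KO}(\s^k D(m,n))\cong \wt{KO}^{-k}(D(m,n))$ and evaluating $w$ on generators.

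First I would compute $\wt{KO}^{-k}(D(m,n))$ from $H^*(D(m,n);\bz_2)=\bz_2[c,d]/(c^{m+1},d^{n+1})$ via the Atiyah--Hirzebruch spectral sequence, using the fibration $\bcp^n\hookrightarrow D(m,n)\to\brp^m$. The hypothesis that $n$ is even enters precisely here: it controls the realification of the complex classes carried by the $d$-powers and resolves the extension problems, so that $\wt{KO}^{-k}(D(m,n))$ decomposes into generators pulled back along the projection $p\colon D(m,n)\to\brp^m$ and generators detected on the classes $d^j$ coming from the fiber.

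Next I would evaluate $w$ on these two families. For classes pulled back along $\s^kp\colon \s^kD(m,n)\to\s^k\brp^m$, the total Stiefel--Whitney class is $(\s^kp)^*$ of the corresponding class on $\s^k\brp^m$; since $p^*$ is injective on mod $2$ cohomology (it sends $c\mapsto c$), these contribute nothing exactly when $\s^k\brp^m$ is $W$-trivial. This is where the numerical hypotheses on $m$ originate: they are the conditions, obtained in \cite{tanaka_2010}, for $W$-triviality of $\s^k\brp^m$, governed through Bott periodicity by $m\bmod 8$ (the congruences $m\neq 8t+1,8t+3,8t+4,8t+5$) together with the low-dimensional coincidences $m+k=4,8$ already recorded in Theorem \ref{main1}. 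For the fiber generators I would compute $w_i$ through the Steenrod action on the suspended classes $\s^k(c^id^j)$, using the Cartan formula together with the Dold relation $Sq^1 d = c\,d$ (and $Sq^2 d = d^2$), and the fact that on a suspension the Stiefel--Whitney classes of a $KO$-generator are determined by such operations applied to its bottom class.

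The main obstacle is the $K$-theory computation combined with the \emph{simultaneous} vanishing of $w$ on all generators. Three points need care: (i) solving the extension problems in the spectral sequence, where the parity of $n$ is essential; (ii) proving that the fiber generators introduce no new Stiefel--Whitney obstruction, so that $W$-triviality is controlled entirely by the base $\s^k\brp^m$ and the coincidences of Theorem \ref{main1} -- here the relation $Sq^1 d = c\,d$ must be shown not to propagate a nonzero class into a critical degree; and (iii) translating the vanishing of the relevant binomial coefficients $\binom{j}{a}\bmod 2$ into the stated congruences and isolated exceptions. I would organize the final argument as a case analysis in $k$, reducing each $k\in\{2,\dots,8\}$ to these two inputs.
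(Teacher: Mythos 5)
Your reduction to $K$-theory (Stiefel--Whitney classes are stable, and $W$ is additive on a suspension because all cup products vanish) is fine and is implicitly how the paper argues as well. But the core of your plan rests on a claim that is false, and the paper itself contains the counterexample. You assert that the fiber generators ``introduce no new Stiefel--Whitney obstruction, so that $W$-triviality is controlled entirely by the base $\s^k\brp^m$,'' and that the numerical hypotheses on $m$ in Theorem \ref{main2} are just Tanaka's conditions for $W$-triviality of $\s^k\brp^m$. Neither is true. Proposition \ref{main5} of the paper shows that $\s^4D(1,n)$ is \emph{not} $W$-trivial for even $n>1$, even though the base $\s^4\brp^1=S^5$ is $W$-trivial: here the nontrivial bundle lives precisely in the fiber summand $\wt{KO}^{-4}(1,n)$ and restricts nontrivially to $\s^4\bcp^n$. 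Moreover, Tanaka's conditions for the base are much weaker than the hypotheses of Theorem \ref{main2}: for $k=3$, $\s^3\brp^m$ fails to be $W$-trivial only for $m=1,5$, whereas the theorem excludes the entire residue class $m=8t+1$. Those excluded residues $8t+1,\,8t+3,\,8t+4,\,8t+5$ are not where the base obstructs; they are exactly the cases the paper cannot settle (they appear in its closing list of open problems). So any argument that derives the congruences from $\s^k\brp^m$ alone is proving the wrong statement.

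The actual source of the mod-$8$ conditions is the Fujii--Yasui computation of the complementary summand in the splitting $\wt{KO}^{-k}(D(m,2r)) = \wt{KO}^{-k}(m,2r)\oplus p^!\wt{KO}^{-k}(\brp^m)$ (Theorem \ref{decomp}): the group $\wt{KO}^{-k}(m,2r)=\pi^!\wt{KO}^{-k}(D(m,2r)/D(m,0))$ vanishes precisely for the residues of $m$ listed in Lemma \ref{zero}, and in those cases the theorem follows from the splitting plus Tanaka. When this summand does not vanish, the paper needs genuinely different, case-specific arguments that your sketch does not supply: (a) if $\wt{KO}^{-k}(m-1,2r)=0$, the exact sequence \eqref{(m,n)} shows every class in $\wt{KO}^{-k}(m,2r)$ is pulled back from $S^m\wedge\bcp^{2r}$, so $W$-triviality of $\s^{k+m}\bcp^{2r}$ finishes (Proposition \ref{prop2}); (b) a restriction argument along $D(m-1,2r)\hookrightarrow D(m,2r)$, using $D(m,2r)/D(m-1,2r)\approx\s^m\bcp^{2r}\vee S^m$ to get injectivity of $i^*$ in the relevant degrees; (c) for $\s^8D(2,2r)$, $\s^8D(3,2r)$, $\s^4D(3,n)$, $\s^8D(7,n)$, a direct analysis of the possible classes $x\,d^{j}+y\,c^2d^{j'}+\cdots$ using Tanaka's Lemma 3.3 and the relations $Sq^1d=cd$, $Sq^2$. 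Your item (iii) about binomial coefficients gestures at such a computation but never confronts the fact that different residues of $m$ require these structurally different mechanisms. Finally, redoing $\wt{KO}^{-k}(D(m,n))$ ``via the Atiyah--Hirzebruch spectral sequence'' is not a routine step one can wave at --- it is the content of the thirty-page paper \cite{fujiidold}, and the theorem's hypotheses cannot even be stated correctly without its answer.
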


We have the following result in the case when $n$ is odd.

\begin{Thm}\label{main3}
Let $\s^kD(m,n)$ be the $k$-fold suspension of the Dold manifold $D(m,n)$ with $m > 0$ and $n$ odd. Then $\s^kD(m,n)$ is $W$-trivial if $k$ and $m$ are as listed above in (1)-(7) of the Theorem \ref{main2} and any one of the following condition is satisfied: 
\begin{enumerate}
\item $n+k=  2,4 \mbox{ or }8$ and $m< k$.
\item $n+k=3,5 \mbox{ or } 7$ and $2n+m+k \neq 4,8$.
\item $n+k = 6$ and $m + n\neq 2,3$. 
\item $n+k \geq 9$.
\end{enumerate}
\end{Thm}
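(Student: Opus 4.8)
The plan is to strip off the top layer of the Dold filtration and reduce to the even case already settled by Theorem \ref{main2}. The conjugation-equivariant inclusion $\bcp^{n-1}\hookrightarrow\bcp^n$ gives a cofibration $D(m,n-1)\hookrightarrow D(m,n)$, and since conjugation on the top cell $\bcp^n/\bcp^{n-1}=(\bc^n)^+$ is the one-point compactification of $n\,\mathbb R_+\oplus n\,\mathbb R_-$, the quotient is the Thom space
\[
D(m,n)/D(m,n-1)=\mathrm{Th}\big(\mathbb R^n\oplus n\gamma\to\brp^m\big)=\s^n\big(\brp^{m+n}/\brp^{n-1}\big),
\]
where $\gamma$ is the tautological line bundle. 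A $k$-fold suspension then yields the cofibre sequence $\s^kD(m,n-1)\xrightarrow{i}\s^kD(m,n)\xrightarrow{q}\s^{n+k}(\brp^{m+n}/\brp^{n-1})$. As $n$ is odd, $n-1$ is even, so Theorem \ref{main2} makes the subspace $\s^kD(m,n-1)$ $W$-trivial exactly when $(k,m)$ appears in its list (1)--(7) — the first hypothesis of the present theorem — and this is independent of the value of $n-1$.

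It therefore remains to show that the quotient $Q:=\s^{n+k}(\brp^{m+n}/\brp^{n-1})$ is $W$-trivial precisely under (1)--(4). Condition (4), $n+k\ge 9$, is immediate, since $Q$ is then a ($\ge 9$)-fold suspension and hence $W$-trivial by \cite[Theorem 2]{AH}. For $n+k\le 8$ I would compute $\widetilde{KO}(Q)\cong\widetilde{KO}^{-(n+k)}(\brp^{m+n}/\brp^{n-1})$ from the cell structure: the cells lie in dimensions $2n+k,\dots,2n+k+m$, each contributing a group $\pi_{*}(KO)$, and because all cup products vanish on a suspension the total Stiefel--Whitney class is additive, so $W$-triviality of $Q$ is equivalent to the vanishing of every $w_j$ on $\widetilde{KO}(Q)$. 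On a single $j$-cell $w_j$ can be nonzero only for $j\in\{1,2,4,8\}$, by the sphere case \cite[Theorem 1]{AH}; whether it is actually realised is governed by the attaching maps of the stunted space $\brp^{m+n}/\brp^{n-1}$. For odd $n$ the bottom attaching map is detected by $Sq^1$ (since $Sq^1$ is nonzero on the bottom class), and this is what should annihilate the potential classes coming from all but the top cell, leaving a single obstruction in dimension $2n+m+k$; its vanishing is exactly the numerical constraint recorded in (1)--(3) (for instance $2n+m+k\neq4,8$ in (2) and $m+n\neq2,3$ in (3)).

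Finally I would assemble the two pieces through the long exact sequences in $\widetilde{KO}$ and in $H^*(-;\bz_2)$ attached to the cofibration, using the additivity and naturality of the $w_j$ on suspensions. The delicate point — and the step I expect to be the main obstacle — is that $W$-triviality does \emph{not} descend through a cofibration formally. For $\xi\in\widetilde{KO}(\s^kD(m,n))$ one has $i^*w_j(\xi)=w_j(i^*\xi)=0$ by Theorem \ref{main2}, so $w_j(\xi)=q^*\gamma$ for some $\gamma\in\widetilde H^j(Q)$; if $\xi$ were pulled back from $Q$ then $\gamma=0$ by the $W$-triviality of $Q$, but in general the obstruction to lifting $\xi$ is the class $i^*\xi$, governed by the connecting map $\partial$ of the sequence. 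Closing the argument thus amounts to checking that those generators of $\widetilde{KO}(\s^kD(m,n))$ which are not pulled back from $Q$ still have vanishing Stiefel--Whitney classes under (1)--(4); equivalently, that $\partial$ introduces no new obstruction. I expect the odd-$n$ attaching map, which forces the only surviving top-cell obstruction into dimension $2n+m+k$, to be exactly the structural input that makes this bookkeeping come out, with the $n$-independence of Theorem \ref{main2} guaranteeing the subspace contributes nothing.
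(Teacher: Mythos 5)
Your reduction is built on the right cofibration (the paper uses the same one, via Fujii's homeomorphism $D(m,n)/D(m,n-1)\approx S^n\wedge(\brp^{m+n}/\brp^{n-1})$), and your identification of the quotient and of the conditions under which it is $W$-trivial matches the paper's Lemma \ref{stunted2} — though the paper proves that lemma much more simply than your attaching-map sketch, by pulling back along the projection $\brp^{m+n}\to\brp^{m+n}/\brp^{n-1}$ (an isomorphism on $H^i$ in the relevant range) and quoting Tanaka's theorem for $\s^{n+k}\brp^{m+n}$. But the step you yourself flag as the main obstacle is a genuine gap, and your plan for closing it does not work. From $W(i^*\xi)=1$ you cannot conclude $i^*\xi=0$ in $\wt{KO}(\s^kD(m,n-1))$: $W$-triviality of the subspace says nothing about stable triviality of restrictions, so a general $\xi$ does not lie in the image of $q^!$, and the long exact sequence gives you no hold on its Stiefel--Whitney classes. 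Knowing additionally that the sequence splits algebraically (which it does, by Fujii--Yasui) still does not help, because an abstract algebraic splitting carries no information about characteristic classes of the bundles in the complementary summand.

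The paper closes exactly this gap with a structural input you are missing, and it points in the opposite direction along the filtration: Fujii--Yasui's splitting $\kappa:\wt{KO}^{-k}(m,2r)\to\wt{KO}^{-k}(m,2r+1)$ is not just algebraic but is realized geometrically as $\kappa=i_2^!\circ p$, where $i_2:D(m,2r+1)\hookrightarrow D(m,2r+2)$ is the inclusion into the \emph{next} Dold manifold. Hence every element of the complementary summand $\kappa(\wt{KO}^{-k}(m,2r))$ is the restriction of a bundle over $D(m,2r+2)$, and since $2r+2$ is even, Theorem \ref{main2} applies to $\s^kD(m,2r+2)$; naturality of Stiefel--Whitney classes under $i_2$ then kills that summand (and the $p^!\wt{KO}^{-k}(\brp^m)$ summand as well). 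The quotient summand is handled by the $W$-triviality of $\s^{n+k}(\brp^{m+n}/\brp^{n-1})$, as in your sketch. This is why the hypotheses of Theorem \ref{main3} invoke the list of Theorem \ref{main2}: it is applied to $D(m,n+1)$, one step up, not to $D(m,n-1)$, one step down as in your proposal. Without the geometric description of $\kappa$ (Section 10 of Fujii--Yasui) your bookkeeping cannot be completed.
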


Observe that if we assume  $n \geq 3$ in the  Theorem \ref{main3}, then $\s^kD(m,n)$ is $W$-trivial except for  $\s^3D(m,5)$ with $m \geq 3$ and $\s^5D(m,3)$ with $m \geq 5$. In the case when $n = 1$ we have the following theorem.

\begin{Thm}\label{main4}
Let $\s^kD(m,1)$ be $k$-fold suspension of the Dold manifold $D(m,1)$ with $m> 0$. Then $\s^kD(m,1)$ is not $W$-trivial if
\begin{enumerate}
\item $k = 1,3 \mbox{ or }7$ and $m \geq k$.
\item $k = 2\mbox{ or }4$ and $m + k  = 2 \mbox{ or }6$.
\item $k = 5$ and $m = 1\mbox{ or }2$.
\end{enumerate}
\end{Thm}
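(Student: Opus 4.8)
The plan is to produce, for each listed triple, a real vector bundle over $\s^kD(m,1)$ carrying a nonzero Stiefel-Whitney class, using three mechanisms. First I record $H^*(D(m,1);\bz_2)=\bz_2[c,d]/(c^{m+1},d^2)$ with $|c|=1,|d|=2$, so that $\wt H^*(\s^kD(m,1);\bz_2)$ is the $k$-fold suspension, concentrated in degrees $k+1,\dots,k+m+2$, with a single top class $\s^k(c^md)$ in degree $k+m+2$. Because all products of positive-degree classes vanish on a suspension, it suffices in each case to realise a single $w_i$.

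The first mechanism reduces to projective space. The bundle $\bcp^1\hookrightarrow D(m,1)\to\brp^m$ admits a section, namely the constant map of $S^m$ to a conjugation-fixed (real) point of $\bcp^1$, which is $\bz_2$-equivariant; hence $\brp^m$ is a retract of $D(m,1)$ and $\s^k\brp^m$ is a retract of $\s^kD(m,1)$. A $W$-nontrivial bundle over $\s^k\brp^m$ then pulls back to one over $\s^kD(m,1)$, so by Tanaka's determination of the $W$-triviality of $\s^k\brp^m$ \cite{tanaka_2010} I can dispose of $k=1$ and of $k=3,\,m\ge3$ and $k=7,\,m\ge7$ (detecting the class $\s^kc$ in the bad degree $k+1\in\{2,4,8\}$). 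The second mechanism is a collapse onto the top sphere: whenever $k+m+2\in\{4,8\}$ (that is, $(k,m)=(1,1)$ or $k+m=6$), collapsing the complement of the single top cell gives $g\colon\s^kD(m,1)\to S^{k+m+2}$ which is onto $H^{k+m+2}$; since $S^4$ and $S^8$ are not $W$-trivial there is a bundle $\eta$ on the sphere with $w_{k+m+2}(\eta)\ne0$, and $w_{k+m+2}(g^*\eta)=\s^k(c^md)\ne0$. This settles $(2,4),(4,2),(5,1),(3,3)$ and reconfirms $(1,5),(1,1)$.

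The only triple not reached this way is $(5,2)$: the top dimension is $9$, so no bad top sphere is available, and $\s^5\brp^2$ is $W$-trivial, so neither mechanism applies. Here the target class is $\s^5(cd)\in H^8$. I would use the inclusion $\s^5D(1,1)\hookrightarrow\s^5D(2,1)$ coming from $\brp^1\subset\brp^2$: the $8$-dimensional complex $\s^5D(1,1)$ already carries, by the collapse mechanism, a bundle with $w_8=\s^5(cd)\ne0$. Restricting to the $8$-skeleton $Y$ of $\s^5D(2,1)$, the same collapse produces $y\in\wt{KO}(Y)$ with $w_8(y)\ne0$, and since $cd$ is a genuine cocycle the restriction $i^*\colon H^8(\s^5D(2,1))\to H^8(Y)$ is an isomorphism; thus any lift of $y$ along $i^*\colon\wt{KO}(\s^5D(2,1))\to\wt{KO}(Y)$ automatically has $w_8=\s^5(cd)\ne0$. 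The cofibre sequence $Y\to\s^5D(2,1)\to S^9$ identifies the obstruction to such a lift as an element of $\wt{KO}^1(S^9)\cong\bz$.

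The hard part will be this last lifting problem. I must compute the connecting homomorphism $\wt{KO}(Y)\to\wt{KO}^1(S^9)$ — equivalently, identify $\wt{KO}(\s^5D(2,1))$ directly — and show that some class with $w_8\ne0$ has vanishing obstruction, i.e.\ that $w_8$ is not forced to die by the attaching of the top $9$-cell. This is exactly where the mod-$2$ Steenrod action on $D(2,1)$ (the behaviour of $Sq^1$ and $Sq^2$ on $cd$, governing how the $9$-cell is glued to the $8$-cell) enters, and it is the one case reducible neither to spheres nor to projective spaces.
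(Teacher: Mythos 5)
Your first mechanism is where the argument breaks down. The retraction $\s^k\brp^m \to \s^k D(m,1) \to \s^k\brp^m$ is genuine (the paper uses this same retraction in Theorem \ref{main1}), but the input you feed into it is wrong: Tanaka's theorem does \emph{not} say that $\s^k\brp^m$ is non-$W$-trivial for $k\in\{3,7\}$ and $m\ge k$. Theorem 1.4 of \cite{tanaka_2010}, as quoted throughout this paper (see Theorem \ref{main1} and Lemma \ref{stunted2}), states that for $k\in\{3,5,7\}$ the suspension $\s^k\brp^m$ is $W$-trivial whenever $m+k\neq 4,8$. So $\s^3\brp^m$ admits no bundle with nontrivial total Stiefel--Whitney class for any $m\ge 3$ except $m=5$, and $\s^7\brp^m$ admits none for any $m\ge 7$; there is simply nothing to pull back. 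Your parenthetical ``detecting the class $\s^kc$ in the bad degree $k+1$'' is exactly the phenomenon Tanaka's theorem rules out for these $m$. Consequently your proposal proves nothing for $k=3$ with $m=4$ or $m\ge 6$, and nothing at all for $k=7$, $m\ge 7$ --- infinitely many of the listed cases. In addition, the one case you do flag as hard, $(k,m)=(5,2)$, is left as an unexecuted $KO$-theoretic lifting problem, so it too remains open in your write-up.

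The missing idea, which is the paper's entire proof (Lemma \ref{stunted1}), is to use the map going the other way: instead of retracting onto $D(m,0)=\brp^m$, collapse it. By the Fujii--Yasui homeomorphism $h_2$ one has $D(m,1)/D(m,0)\approx \s\brp^{m+1}$, and since $i^*:H^*(D(m,1);\bz_2)\to H^*(D(m,0);\bz_2)$ is surjective, the quotient map $\pi$ is injective on positive-degree mod $2$ cohomology; this persists after suspension. Hence non-$W$-triviality of $\s^{k+1}\brp^{m+1}$ implies non-$W$-triviality of $\s^kD(m,1)$. This shifts the parameters from $(k,m)$ to $(k+1,m+1)$, and every case of the theorem then lands squarely in Tanaka's non-trivial range: $k\in\{1,3,7\}$, $m\ge k$ becomes $k+1\in\{2,4,8\}$, $m+1\ge k+1$; $k\in\{2,4\}$, $m+k\in\{2,6\}$ becomes $k+1\in\{3,5\}$, $(m+1)+(k+1)\in\{4,8\}$; and $k=5$, $m\in\{1,2\}$ becomes $\s^6\brp^2$ or $\s^6\brp^3$, exactly the exceptional pairs for suspension parameter $6$. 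In particular $(5,2)$ requires no separate $KO$-computation at all. Your second mechanism (the top-cell collapse) is correct as far as it goes, but it only reaches the cases $k+m+2\in\{4,8\}$ and is subsumed by the collapse of $\brp^m$, which reaches every listed case.
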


To prove our results we shall require the description, by Fujii-Yasui \cite{fujiidold},  of $KO$-groups of Dold manifolds. We shall recall this in Section \ref{pre} and  prove our results in  Section \ref{result}.

%

\section{Preliminaries}\label{pre}
 In this section we shall recall the notations and results from \cite{fujiidold}, where  M. Fujii and T. Yasui have computed the $KO$-groups of Dold manifolds. These will be used to prove our results.

Let $\pi:D(m,n) \rightarrow D(m,n)/D(m,0)$ be the projection. Let $p: D(m,n) \rightarrow \brp^m$ be the projection map of the fiber bundle which is described in the introduction and let $i: D(m,0) \hookrightarrow D(m,n)$ be the inclusion defined by \[i([x_0,x_1,\ldots, x_m]) = [x_0,x_1,\ldots,x_m,1,0,\ldots,0].\] Consider the following exact sequence of $\wt{KO}$-groups,
\begin{equation} \label{longexact}
\cdots \rightarrow \wt{KO}^{-k}(D(m,n)/D(m,0)) \xrightarrow{\pi^!} \wt{KO}^{-k}(D(m,n)) \xrightarrow{i^!} \wt{KO}^{-k}(D(m,0)) \rightarrow \cdots
\end{equation}
Under the identification $D(m,0) = \brp^m$, we have the composition $i^! \circ p^! =$ identity. Hence the homomorphism 
\[p^!: \wt{KO}^{-k}(\brp^m) = \wt{KO}^{-k}(D(m,0)) \rightarrow \wt{KO}^{-k}(D(m,n))\] is an injective map and it gives the splitting of the exact sequence \eqref{longexact}. Let $\wt{KO}^{-k}(m,n) := \pi^!\wt{KO}^{-k}(D(m,n)/D(m,0))$. Then we have the following theorem.

\begin{Thm}\cite[Theorem 1]{fujiidold} \label{decomp}
$\wt{KO}^{-k}(D(m,n)) = \wt{KO}^{-k}(m,n) \oplus p^!\wt{KO}^{-k}(\brp^m),$ where $p: D(m,n) \rightarrow \brp^m$ is the natural projection.\qed
\end{Thm}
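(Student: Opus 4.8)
The plan is to recognize Theorem \ref{decomp} as the formal splitting of the long exact sequence \eqref{longexact} induced by the retraction $p^!$, so that the whole argument becomes homological once the geometric input $i^! \circ p^! = \mathrm{id}$ has been recorded. I would work throughout with the cofibre sequence $D(m,0) \hookrightarrow D(m,n) \xrightarrow{\pi} D(m,n)/D(m,0)$, whose associated Puppe sequence in $\wt{KO}$-theory is precisely \eqref{longexact}, with $\pi^!$ the map induced by the collapse and $i^!$ the restriction along the inclusion $i$.

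First I would observe that, because $p \circ i = \mathrm{id}_{\brp^m}$ under the identification $D(m,0) = \brp^m$, contravariance gives $i^! \circ p^! = (p \circ i)^! = \mathrm{id}$ in every degree $-k$. In particular $i^!$ is surjective in all degrees. Feeding this into the long exact sequence, the connecting homomorphism $\delta \colon \wt{KO}^{-k}(D(m,0)) \to \wt{KO}^{-k+1}(D(m,n)/D(m,0))$ has kernel equal to $\mathrm{im}\, i^! = \wt{KO}^{-k}(D(m,0))$, so $\delta = 0$ identically. Consequently \eqref{longexact} collapses, for each $k$, into a short exact sequence
\begin{equation*}
0 \to \wt{KO}^{-k}(D(m,n)/D(m,0)) \xrightarrow{\pi^!} \wt{KO}^{-k}(D(m,n)) \xrightarrow{i^!} \wt{KO}^{-k}(\brp^m) \to 0,
\end{equation*}
where the injectivity of $\pi^!$ comes from the vanishing of $\delta$ in the adjacent degree.

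Next I would invoke the splitting lemma for this short exact sequence, using $p^!$ as a section of $i^!$. Concretely, for $x \in \wt{KO}^{-k}(D(m,n))$ the element $x - p^! i^!(x)$ lies in $\ker i^! = \mathrm{im}\, \pi^!$, and is hit by a unique class since $\pi^!$ is injective; this produces the decomposition $x = \pi^!(b) + p^!(a)$ with $a = i^!(x)$. To check the sum is direct I would apply $i^!$ to a relation $\pi^!(b) + p^!(a) = 0$: since $i^! \pi^! = 0$ and $i^! p^! = \mathrm{id}$ this forces $a = 0$, whence $\pi^!(b) = 0$ and then $b = 0$ by injectivity. Writing $\wt{KO}^{-k}(m,n) := \pi^! \wt{KO}^{-k}(D(m,n)/D(m,0))$, this is exactly the asserted internal direct sum $\wt{KO}^{-k}(D(m,n)) = \wt{KO}^{-k}(m,n) \oplus p^!\wt{KO}^{-k}(\brp^m)$.

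There is essentially no genuine obstacle beyond bookkeeping, since the decisive geometric fact $i^! \circ p^! = \mathrm{id}$ is already available; the one point warranting care is the need for $i^!$ to be surjective in \emph{all} degrees simultaneously, as this is what kills every connecting map and upgrades the long exact sequence to short exact sequences in each $k$ — a section in a single degree would not suffice. The real work of Fujii–Yasui lies not in this formal splitting but in the subsequent explicit computation of the summand $\wt{KO}^{-k}(m,n)$, which this theorem serves to isolate.
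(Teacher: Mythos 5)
Your proposal is correct and follows essentially the same route the paper takes: the paper's justification (preceding the theorem statement) is precisely that $p \circ i = \mathrm{id}$ forces $i^! \circ p^! = \mathrm{id}$, so $p^!$ splits the long exact sequence \eqref{longexact}, yielding the internal direct sum with $\wt{KO}^{-k}(m,n) = \pi^!\wt{KO}^{-k}(D(m,n)/D(m,0))$. You have merely spelled out the homological bookkeeping (vanishing of all connecting maps, the splitting lemma, directness of the sum) that the paper, citing Fujii--Yasui, leaves implicit.
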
 

The $KO$-groups of the projective space $\brp^m$ has been studied by M. Fujii in \cite{fujii_proj}. The group $\wt{KO}^{-k}(m,n)$ has been computed by M. Fujii and T. Yasui in \cite{fujiidold} by making use of the following two homeomorphisms \cite[Propsition 2]{fujii_complex}
\begin{enumerate}
\item $h_1: D(m,n)/D(m-1,n) \approx S^m \wedge (\bcp^n)^+$.
\item $h_2: D(m,n)/D(m,n-1) \approx S^n \wedge (\brp^{m+n}/\brp^{n-1})$.
\end{enumerate}
Here, for a space $X$, $X^+$ denotes the  disjoint union of $X$ and   a point. The identification of the spaces via homeomorphisms $h_1$ gives rise to the following long exact sequence  \cite[p. 58]{fujiidold},

\begin{eqnarray}\label{(m,n)}
\cdots \longrightarrow \wt{KO}^{-k}(S^m\wedge \bcp^n) \xrightarrow{f^!} \wt{KO}^{-k}(m,n) \xrightarrow{i^!} \wt{KO}^{-k}(m-1,n)   \nonumber \\ \xrightarrow{\delta} \wt{KO}^{-i+1}(S^m \wedge \bcp^n)\longrightarrow \cdots,
\end{eqnarray}
where $f = h_1 \circ \pi$ and $i: D(m-1,n) \hookrightarrow D(m,n)$ is the inclusion. The long exact sequence \eqref{(m,n)} is a direct summand of the following long exact sequence of $\wt{KO}$-groups for  the pair $(D(m,n),D(m-1,n))$,
\[\rightarrow \wt{KO}^{-k}(D(m,n)/D(m-1,n)) \xrightarrow{\pi^!} \wt{KO}^{-k}(D(m,n)) \xrightarrow{i^!} \wt{KO}^{-k}(D(m-1,n)) \rightarrow \cdots \]

In the case when $n = 2r$, the groups $\wt{KO}^{-k}(m,2r)$ have been described in Theorem 3 \cite{fujiidold}. The proof of the following lemma follows directly from the Theorem 3  of \cite{fujiidold} by counting the generators of $\wt{KO}^{-k}(m,2r)$.

\begin{Lem}
\label{zero} Let $\wt{KO}^{-k}(m,2r) := \pi^!\wt{KO}^{-k}(D(m,2r)/D(m,0))$, where $\pi$ is the projection. Let $m> 0$. Then  $\wt{KO}^{-k}(m,2r) = 0$ if,
\begin{enumerate}
\item $k =2$ and $m = 1$.
\item $k =3$ and $m = 8t+2, 8t+3, 8t+4 \mbox{ or } 8t+6$.
\item $k =5$ and $m = 8t, 8t+4, 8t+5 \mbox{ or } 8t+6$.
\item $k =6$ and $m = 8t + 1, 8t+ 5, 8t +6 \mbox{ or } 8t+7$.
\item $k =7$ and $m = 8t, 8t + 2, 8t + 6 \mbox{ or } 8t + 7$.
\end{enumerate} \qed
\end{Lem}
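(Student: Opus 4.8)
The plan is to read the statement directly off the explicit computation of $\wt{KO}^{-k}(m,2r)$ recorded in Theorem 3 of \cite{fujiidold}. That theorem presents each group $\wt{KO}^{-k}(m,2r)$ as a direct sum of finitely many cyclic groups, the number and orders of which are prescribed by the residues of $k$ and $m$ modulo $8$ together with the size of $m$. Since a direct sum of cyclic groups is trivial exactly when it has no summands, the assertion $\wt{KO}^{-k}(m,2r)=0$ is equivalent to the statement that the generator count furnished by Theorem 3 is zero for each listed pair $(k,m)$. So I would first transcribe the relevant cases of Theorem 3, fixing its indexing conventions, and thereby reduce the whole lemma to a finite bookkeeping check.

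The mechanism underlying these vanishings is Bott periodicity. Building $\wt{KO}^{-k}(m,2r)$ up through the long exact sequence \eqref{(m,n)} attached to $h_1$ (and the companion sequence coming from $h_2$), the potential generators are assembled from groups of the form $\wt{KO}^{-k}(S^{m+2j})\cong KO_{k+m+2j}(\mathrm{pt})$ with $1\le j\le 2r$, and $KO_*(\mathrm{pt})$ is $8$-periodic, non-zero only in degrees congruent to $0,1,2,4\pmod 8$. This is why the answer is governed principally by the residues of $k$ and $m$ modulo $8$. It is not governed by those residues alone, however: the number of cells actually in range also enters, which is precisely why for $k=2$ only the single value $m=1$ appears, whereas for $k=3,5,6,7$ one obtains full congruence classes $m=8t+s$. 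For the listed data the boundary homomorphisms in the two long exact sequences annihilate every surviving generator, and rather than re-derive these cancellations I would read off the resulting closed-form count from Theorem 3.

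Concretely, I would treat the five values $k=2,3,5,6,7$ in turn. For each I would write $m=8t+s$, run through the admissible residues $s$ named in the statement, substitute into the formula of Theorem 3, and confirm that the number of cyclic summands is zero; the four residue classes listed for each of $k=3,5,6,7$ (respectively the single value $m=1$ for $k=2$) are exactly those for which no summand appears. Because $k$ enters only through its residue modulo $8$ and the generator count stabilises in $t$, it suffices to verify each residue for small $t$ and then invoke periodicity, keeping the genuinely non-periodic low-dimensional cases (such as $k=2$) separate. This renders the verification finite.

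The main obstacle I anticipate is organizational rather than conceptual. Theorem 3 of \cite{fujiidold} is stated as several cases according to $m \bmod 8$, each carrying its own generator count, and these must be aligned with the indexing used here for $\wt{KO}^{-k}(m,2r)$ and with the eightfold periodicity in $k$. The most delicate point is distinguishing the genuinely empty cases from the near misses: one must check that a reported count of zero summands is a true cancellation in the long exact sequences, not an artifact of an unresolved extension or an uncomputed boundary map, and in particular one must confirm why, say, $k=2,\ m=1$ vanishes while $k=2,\ m=9$ does not. Once the conventions of Theorem 3 are pinned down and this distinction is made, the check for the listed residues reduces to a routine, finite table lookup.
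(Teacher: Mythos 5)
Your proposal matches the paper's argument exactly: the paper also proves Lemma \ref{zero} simply by citing Theorem 3 of \cite{fujiidold} and observing that the generator count for $\wt{KO}^{-k}(m,2r)$ is zero in each listed case, i.e.\ a finite table lookup. Your additional remarks on Bott periodicity and the long exact sequences are background motivation rather than a different method, so there is nothing further to reconcile.
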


In the case when $n = 2r+1$, the long exact sequence of $\wt{KO}$-groups for the pair $(D(m,2r+1),D(m,2r))$ takes the following form \cite[p. 55]{fujiidold},
\begin{equation} \label{2r+1}
\rightarrow \wt{KO}^{-k}(D(m,2r+1)/D(m,2r)) \rightarrow \wt{KO}^{-k}(m,2r+1) \xrightarrow{i_1^!} \wt{KO}^{-k}(m,2r) \rightarrow,
\end{equation}
where $i_1: D(m,2r) \hookrightarrow D(m,2r+1)$ is the inclusion. For the long exact seqeunce \eqref{2r+1}, there exists  an algebraic splitting homomorphism \[\kappa: \wt{KO}^{-k}(m,2r) \rightarrow \wt{KO}^{-k}(m,2r+1)\] such that $i_1^! \circ \kappa =$ identity (refer Section 10 of \cite{fujiidold}). In fact, the homomorphism $\kappa$ is defined as the composition $i_2^! \circ p$, where $i_2:D(m,2r+1) \hookrightarrow D(m,2r+2)$ is the inclusion and $p:\wt{KO}^{-k}(m,2r) \rightarrow \wt{KO}^{-k}(m,2r+2)$ is an algebraic homomorphism defined in Section 10 of \cite{fujiidold}. Therefore we have the following theorem.

\begin{Thm}{\em (\cite[Theorem 2]{fujiidold})} \label{odd_decomp}
\[\wt{KO}^{-k}(m,2r+1) = \wt{KO}^{-k}(m,2r) \oplus \wt{KO}^{-k}(D(m,2r+1)/D(m,2r)).\] \qed
\end{Thm}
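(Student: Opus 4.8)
The plan is to obtain the decomposition by running the usual splitting lemma on the long exact sequence \eqref{2r+1}, using the section $\kappa$ recalled just above the statement. First I would write out the full relevant portion of the $\wt{KO}$-exact sequence of the pair $(D(m,2r+1),D(m,2r))$, restricted to its direct summand, keeping both connecting homomorphisms in view:
\[
\cdots \xrightarrow{\delta} \wt{KO}^{-k}(D(m,2r+1)/D(m,2r)) \xrightarrow{\pi^!} \wt{KO}^{-k}(m,2r+1) \xrightarrow{i_1^!} \wt{KO}^{-k}(m,2r) \xrightarrow{\delta} \wt{KO}^{-k+1}(D(m,2r+1)/D(m,2r)) \xrightarrow{\pi^!} \cdots .
\]
The degree-raising convention for $\delta$ here is the same as in \eqref{(m,n)}, so the connecting map shifts the upper index from $-k$ to $-k+1$.

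The crucial input is that $\kappa = i_2^! \circ p$ is a right inverse of $i_1^!$ in every degree, that is $i_1^! \circ \kappa = \mathrm{id}$. This forces $i_1^!$ to be surjective in each degree $-k$. By exactness at $\wt{KO}^{-k}(m,2r)$, surjectivity of $i_1^!$ means $\ker \delta = \wt{KO}^{-k}(m,2r)$, so the outgoing $\delta$ vanishes; applying this one degree lower shows that the incoming $\delta$ (the one landing in $\wt{KO}^{-k}(D(m,2r+1)/D(m,2r))$) is also zero, whence $\pi^!$ is injective. Consequently the six-term fragment collapses, for each $k$, to the short exact sequence
\[
0 \longrightarrow \wt{KO}^{-k}(D(m,2r+1)/D(m,2r)) \xrightarrow{\pi^!} \wt{KO}^{-k}(m,2r+1) \xrightarrow{i_1^!} \wt{KO}^{-k}(m,2r) \longrightarrow 0 .
\]
Since $i_1^!$ admits the right inverse $\kappa$, this sequence splits, and I would conclude $\wt{KO}^{-k}(m,2r+1) = \mathrm{im}\,\pi^! \oplus \mathrm{im}\,\kappa \cong \wt{KO}^{-k}(D(m,2r+1)/D(m,2r)) \oplus \wt{KO}^{-k}(m,2r)$, which is the asserted identity.

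The genuine obstacle is precisely the one point the homological bookkeeping takes for granted: the existence of $\kappa$ together with the identity $i_1^! \circ \kappa = \mathrm{id}$. Unwinding $\kappa = i_2^! \circ p$, this amounts to showing that composing the algebraically defined homomorphism $p \colon \wt{KO}^{-k}(m,2r) \to \wt{KO}^{-k}(m,2r+2)$ with the restriction $i_1^! \circ i_2^!$ induced by the two successive inclusions $D(m,2r) \hookrightarrow D(m,2r+1) \hookrightarrow D(m,2r+2)$ returns the identity. Establishing this requires the explicit description of the generators of these $\wt{KO}$-groups and of the map $p$, and is the substantive content carried out in Section 10 of \cite{fujiidold}; granting it, the direct sum splitting above is automatic.
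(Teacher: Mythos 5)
Your argument is correct and takes essentially the same route as the paper: the paper likewise treats this as a quoted theorem of Fujii--Yasui and justifies it by the long exact sequence \eqref{2r+1} together with the splitting homomorphism $\kappa = i_2^! \circ p$ with $i_1^! \circ \kappa = \mathrm{id}$ from Section 10 of \cite{fujiidold}, leaving the splitting-lemma bookkeeping implicit. You have simply written out that bookkeeping explicitly, and you correctly identify that the only substantive input is the existence of $\kappa$, which both you and the paper defer to \cite{fujiidold}.
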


In the above direct sum decomposition, the group $\wt{KO}^{-k}(m,2r)$ is direct summand of $\wt{KO}^{-k}(m,2r+1)$ via the monomorphism $\kappa$, and the group $\wt{KO}^{-k}(D(m,2r+1)/D(m,2r))$ is isomorphic to $\wt{KO}^{-k}(S^{2r+1} \wedge (\brp^{2r+1+m}/\brp^{2r}))$ by the homeomorphism $h_2$. The later group, $\wt{KO}^{-k}(S^{2r+1} \wedge (\brp^{2r+1+m}/\brp^{2r}))$, has been computed by M. Fujii and T. Yasui in \cite{fujiistunted}.

\section{Proof of Main Results} \label{result}
 We first state the following well known facts which we shall use implicitly  in our proofs.
 \begin{enumerate}
 \item For a vector bundle $\xi$ over a CW-complex $X$, the smallest integer $k> 0$ with $w_k(\xi) \neq 0$ is a power of $2$ (see, for example, \cite{milnor}, p. 94 ).
 
 \item If $\wt{KO}(X)= 0$ then every vector bundle  over $X$ is stably trivial and hence $W(\xi) = 1$ for any vector bundle $\xi$ over $X$. Thus $X$ is $W$-trivial.
 
 \item Recall \cite{dold} that the  $\bz_2$-cohomology ring of the Dold manifold $D(m,n)$ is given  as
 \[ H^k(D(m,n);\bz_2) = \bz_2[c,d]/(c^{m+1} = 0, d^{n+1} = 0),\]
 where $c \in H^1(D(m,n);\bz_2)$ and $d\in H^2(D(m,n);\bz_2)$. If $c'$ is the generator of $H^1(\brp^m;\bz_2)$ and $d'$ is the generator of $H^2(\bcp^n;\bz_2)$ then $p^*(c') = c$ and $i^*(d) = d'$, where $p:D(m,n) \rightarrow \brp^m$ is the projection and $i:\bcp^n \hookrightarrow D(m,n)$ is the fibre inclusion of  the fibre bundle \[ \bcp^n \stackrel{i}\hookrightarrow D(m,n) \stackrel{p}\longrightarrow \brp^m.\] The action of Steenrod squares on the cohomology ring $H^*(D(m,n) ; \bz_2)$  are completely determined by the fact that  $Sq^1 d = cd$.
 
  \end{enumerate}

\begin{proof}[{ \em \textbf{Proof of Theorem \ref{main1}}}] Consider the projection map $p: D(m,n) \rightarrow \brp^m$. Since the composition map, \[\brp^m = D(m,0) \hookrightarrow D(m,n) \xrightarrow{p} \brp^m\] is the identity map, the induced map \[p^*: H^i(\brp^m;\bz_2) \rightarrow H^i(D(m,n);\bz_2)\] is injective. Hence the suspension map induces an injective map \[ \s^k p^*:H^i(\s^k \brp^m;\bz_2) \rightarrow H^i(\s^k D(m,n);\bz_2).\] Now if there is a vector bundle $\xi$ over $\s^k\brp^m$ with $w_i(\xi)\neq 0$, then $w_i(p^*(\xi))\neq 0$. Thus we have showed that if $\s^k\brp^m$ is not $W$-trivial then the $k$-fold suspension $\s^kD(m,n)$ of Dold manifold $D(m,n)$ is also not $W$-trivial. Thus the proof of the Theorem \ref{main1}, now follows from the Theorem 1.4 \cite{tanaka_2010}. \end{proof}

We now come to the proof of the Theorem \ref{main2}. The Theorem \ref{main2} will be proved in sequence  of propositions below. 
\begin{Prop} \label{prop1}
Let $m > 0$. Then $\s^kD(m,2r)$ is $W$-trivial if
\begin{enumerate}
\item $k = 2$ and $m = 1$.
\item $k = 3$ and $m = 8t+2, 8t+3, 8t+4 \mbox{ or } 8t+6$.
\item $k = 5$ and $m = 8t, 8t+4, 8t+5 \mbox{ or } 8t+6$.
\item $k = 6$ and $m = 8t + 1, 8t+ 5, 8t +6 \mbox{ or } 8t+7$.
\item $k = 7$ and $m = 8t, 8t + 2, 8t + 6 \mbox{ or } 8t + 7$.
\end{enumerate}
\end{Prop}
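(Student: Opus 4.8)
The plan is to reduce $W$-triviality of $\s^kD(m,2r)$ to that of the base $\s^k\brp^m$, by using the $KO$-splitting of Theorem~\ref{decomp} together with the vanishing in Lemma~\ref{zero}. The starting observation is that the total Stiefel--Whitney class is a stable, multiplicative invariant, so it factors through a natural transformation $W\colon\wt{KO}(X)\to\bigl(1+\wt H^{*}(X;\bz_2)\bigr)^{\times}$; in particular $W(f^{*}\beta)=f^{*}W(\beta)$ for every map $f$. Hence $W(\xi)$ depends only on the stable class $[\xi]\in\wt{KO}(\s^kD(m,2r))=\wt{KO}^{-k}(D(m,2r))$, and it suffices to understand this one group.

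Next I would apply Theorem~\ref{decomp} to write $\wt{KO}^{-k}(D(m,2r))=\wt{KO}^{-k}(m,2r)\oplus p^{!}\wt{KO}^{-k}(\brp^m)$, where $p\colon D(m,2r)\to\brp^m$ is the projection. In each of the five cases the hypotheses on $(k,m)$ are exactly those of Lemma~\ref{zero}, so $\wt{KO}^{-k}(m,2r)=0$ and the first summand drops out. Therefore every class in $\wt{KO}(\s^kD(m,2r))$ equals $(\s^k p)^{*}\beta$ for some $\beta\in\wt{KO}(\s^k\brp^m)$.

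Now take any bundle $\xi$ over $\s^kD(m,2r)$ and write $[\xi]=(\s^k p)^{*}\beta$; representing $\beta$ by an actual bundle $\eta$ over the finite complex $\s^k\brp^m$, one sees that $\xi$ is stably isomorphic to $(\s^k p)^{*}\eta$, whence $W(\xi)=(\s^k p)^{*}W(\eta)$ by naturality. Thus the proposition follows once I know that $\s^k\brp^m$ is itself $W$-trivial for each listed pair. This I would read off from Tanaka's classification (Theorem~1.4 of \cite{tanaka_2010}) --- the same input used in the proof of Theorem~\ref{main1} --- by checking, modulo $8$, that each residue of $m$ listed in (1)--(5), for the corresponding $k\in\{2,3,5,6,7\}$, lies in the $W$-trivial range; then $W(\eta)=1$ and so $W(\xi)=1$.

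The one genuinely nonformal step is this last verification, and it is where I expect the main obstacle to lie. It is not automatic that the conditions making $\wt{KO}^{-k}(m,2r)$ vanish also kill the Stiefel--Whitney classes coming from $p^{!}\wt{KO}^{-k}(\brp^m)$, since that summand need not itself be zero; what must be confirmed is the weaker assertion that all of its classes are $W$-trivial. Concretely this is the bookkeeping of matching the mod-$8$ conditions of Lemma~\ref{zero} against Tanaka's mod-$8$ criterion for $W$-triviality of $\s^k\brp^m$, and it is where the specific residues listed in (1)--(5) must be checked to be consistent --- as they must be, since otherwise the injectivity argument of Theorem~\ref{main1} would already contradict the statement.
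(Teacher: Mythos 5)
Your proposal is correct and takes essentially the same route as the paper: the paper's proof likewise uses Theorem \ref{decomp} to split $\wt{KO}^{-k}(D(m,2r))$, kills the summand $\wt{KO}^{-k}(m,2r)$ by Lemma \ref{zero}, and disposes of the summand $p^!\wt{KO}^{-k}(\brp^m)$ by the stability and naturality of $W$ together with the $W$-triviality of $\s^k\brp^m$ from Theorem 1.4 of \cite{tanaka_2010}. The residue bookkeeping you flag at the end is genuine but routine (each listed class of $m$ avoids Tanaka's exceptional values $m\geq k$ for $k=2$, $m+k=4,8$ for $k=3,5,7$, and $m=2,3$ for $k=6$), and the paper handles it simply by citing that theorem.
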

\begin{proof}Let $k$ and $m$ be as in the statement of the proposition.
Note that the group $\wt{KO}^{-k}(m,2r) = 0$ (Lemma \ref{zero}) and the $k$-fold suspension $\s^k\brp^m$ of $\brp^m$ is  $W$-trivial (Theorem 1.4 \cite{tanaka_2010}).  Now by the decomposition,
\[\wt{KO}^{-k}(D(m,2r)) = \wt{KO}^{-k}(m,2r) \oplus p^!\wt{KO}^{-k}(\brp^m),\] of the  Theorem \ref{decomp}, any vector bundle $\xi \in \wt{KO}^{-k}(D(m,2r))$ is stably  equivalent to $\eta \oplus \nu$, where $\eta \in \wt{KO}^{-k}(m,2r)$ and $\nu \in p^!\wt{KO}^{-k}(\brp^m)$. Since $W(\eta) = 1$ and $W(\nu) = 1$, we have $W(\xi) = 1$. This completes the proof of the proposition.
\end{proof}

\begin{Prop}\label{prop2}
Let $m > 0$. Then $\s^kD(m,2r)$ is $W$-trivial if
\begin{enumerate}
\item $k = 3$ and $m = 8t+5 (t>0) \mbox{ or } 8t+7$.
\item $k = 5$ and $m = 8t+1 \mbox{ or } 8t+7$.
\item $k = 6$ and $m = 8t \mbox{ or } 8t+2 (t>0)$ .
\item $k = 7$ and $m = 8t +1 \mbox{ or } 8t+3$.
\end{enumerate}
\end{Prop}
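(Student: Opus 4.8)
The plan is to push the argument of Proposition \ref{prop1} one step further: there the summand $\wt{KO}^{-k}(m,2r)$ vanished, whereas here it does not, so I would instead show through the exact sequence \eqref{(m,n)} that every class in it is pulled back from a highly suspended complex projective space. To begin, I would apply Theorem \ref{decomp} together with the $W$-triviality of $\s^k\brp^m$ (Theorem 1.4 \cite{tanaka_2010}) exactly as in Proposition \ref{prop1}: any bundle over $\s^kD(m,2r)$ is stably $\eta\oplus\nu$ with $\eta\in\wt{KO}^{-k}(m,2r)$ and $\nu\in p^!\wt{KO}^{-k}(\brp^m)$, and since $\nu$ is pulled back from the $W$-trivial space $\s^k\brp^m$ we have $W(\nu)=1$. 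By the Whitney product formula it then suffices to prove $W(\eta)=1$ for every $\eta\in\wt{KO}^{-k}(m,2r)$.

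The key observation is that for each listed pair $(k,m)$ with $m\geq 2$ the integer $m-1$ falls into one of the vanishing residue classes of Lemma \ref{zero}: for instance $k=3$ with $m\equiv 5,7\pmod 8$ gives $m-1\equiv 4,6$, $k=6$ with $m\equiv 0,2$ gives $m-1\equiv 7,1$, and the cases $k=5,7$ are checked the same way. Hence $\wt{KO}^{-k}(m-1,2r)=0$, so in \eqref{(m,n)} the restriction $i^!$ is the zero map and $f^!\colon\wt{KO}^{-k}(S^m\wedge\bcp^{2r})\to\wt{KO}^{-k}(m,2r)$ is surjective. Thus $\eta=g^*\zeta$, where $g$ is the composite of $\s^kf$ with the collapse onto the smash summand $\s^k(S^m\wedge\bcp^{2r})=\s^{m+k}\bcp^{2r}$ and $\zeta$ is a bundle over $\s^{m+k}\bcp^{2r}$. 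Since $m\geq 2$ forces $m+k\geq 9$ in every listed case, $\s^{m+k}\bcp^{2r}$ is $W$-trivial by Atiyah--Hirzebruch \cite[Theorem 2]{AH}, and therefore $W(\eta)=g^*W(\zeta)=1$.

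The main obstacle is the low boundary $m=1$. For $k=7,\ m=1$ one has $m+k=8$, so $\s^7D(1,n)$ is in fact not $W$-trivial by Theorem \ref{main1} and must be excluded (that is, the class $8t+1$ should be read with $t>0$). The genuine extra case is $k=5,\ m=1$: here $m-1=0$ lies outside the range of Lemma \ref{zero}, so $\wt{KO}^{-5}(0,2r)=\wt{KO}(\s^5\bcp^{2r})$ need not vanish and $f^!$ need not be surjective. For this case I would argue directly from \eqref{(m,n)}, using that both $\s^5\bcp^{2r}$ and $\s^6\bcp^{2r}$ are $W$-trivial by Tanaka's classification of $W$-trivial iterated suspensions of $\bcp^n$ \cite{tanaka_2010}; tracking the Stiefel--Whitney classes of $\eta$ simultaneously through the two maps of \eqref{(m,n)} is the most delicate step.
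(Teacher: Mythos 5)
For $m\geq 2$ your argument is essentially the paper's own proof: Theorem \ref{decomp} together with Tanaka's Theorem 1.4 \cite{tanaka_2010} reduces everything to bundles in $\wt{KO}^{-k}(m,2r)$; Lemma \ref{zero} applied at $m-1$ (your residue checks are all correct) makes $f^!$ in \eqref{(m,n)} surjective; and every such bundle is then pulled back from $\s^{m+k}\bcp^{2r}$. The only divergence is that the paper kills $W$ on $\s^{m+k}\bcp^{2r}$ by quoting Tanaka's Theorem 1.5, while you invoke Atiyah--Hirzebruch \cite{AH} via $m+k\geq 9$; both are valid in the listed cases with $m\geq 2$. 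Your exclusion of $k=7$, $m=1$ is also correct, and here you are more careful than the paper: $\s^7\brp^1=S^8$ is not $W$-trivial, so $\s^7D(1,n)$ is not $W$-trivial by the pullback argument of Theorem \ref{main1}(3); Theorem \ref{main2}(6) indeed excludes $m=1$, and the paper's proof of this proposition silently breaks there at the step ``$\s^k\brp^m$ is $W$-trivial''.

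The genuine gap is $k=5$, $m=1$, a case the result does assert (Theorem \ref{main2}(4) allows $m=1$) and which you leave unfinished. Your proposed repair cannot close as described: $W$-triviality of $\s^5\bcp^{2r}$ only gives $W(i^!\eta)=1$, not $i^!\eta=0$, so exactness of \eqref{(m,n)} does not produce a lift of $\eta$ along $f^!$; moreover $W(i^!\eta)=1$ controls only the components of $W(\eta)$ outside the ideal $(c)=\ker(i^*)$, where $i^*:H^*(D(1,2r);\bz_2)\to H^*(\bcp^{2r};\bz_2)$, and that ideal is exactly where the danger lies. Two honest ways to finish. (i) The group $\wt{KO}^{-5}(0,2r)\cong\wt{KO}^{-5}(\bcp^{2r})$ does vanish, by Fujii's computation \cite{fujii_proj}; this is the $m-1=0$ substitute for Lemma \ref{zero} that the paper itself tacitly uses, and with it $f^!$ is surjective and your argument runs verbatim, except that you must quote Tanaka's Theorem 1.5 for the $W$-triviality of $\s^{6}\bcp^{2r}$ ($2r\geq 2$), since $m+k=6<9$ puts it out of reach of Atiyah--Hirzebruch. (ii) Avoid $KO$-theory for this case entirely and argue as in Proposition \ref{prop4}: a first nonzero class $w_{2^s}(\xi)$ of a bundle $\xi$ over $\s^5D(1,2r)$ desuspends to a class $a\in H^{2^s-5}(D(1,2r);\bz_2)$ of odd degree, hence $a=\epsilon\, cd^{2^{s-1}-3}$ with $\epsilon\in\bz_2$; then $Sq^2a=\epsilon\, cd^{2^{s-1}-2}$, which is nonzero whenever $a\neq 0$ (if $2^{s-1}-3\leq 2r$ then, by parity, $2^{s-1}-2\leq 2r$), and this contradicts Lemma 3.3 of \cite{tanaka_2010} since $2<2^{s-1}$.
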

\begin{proof}Let $k$ and $m$ be as in the statement of the proposition.
Then consider the following exact sequence, \eqref{(m,n)}, 
\[ \cdots \rightarrow\wt{KO}^{-k}(S^{m} \wedge \bcp^{2r}) \xrightarrow{f^!} \wt{KO}^{-k}(m,2r) \xrightarrow{i^!} \wt{KO}^{-k}(m-1,2r) \rightarrow \cdots.\]
Since $\wt{KO}^{-k}(m-1,2r) = 0$ (Lemma \ref{zero}), the map $f^!$ is surjective. By Theorem 1.5 \cite{tanaka_2010}, $\s^{k+m}\bcp^{2r}$ is $W$-trivial and this implies that for any vector bundle $\xi \in \wt{KO}^{-k}(m,2r)$ the total Stiefel-Whitney class $W(\xi) = 1$. Furthermore, we know that $\s^{k}\brp^{m}$ is $W$-trivial (Theorem 1.4  \cite{tanaka_2010}). Hence by the decomposition \[\wt{KO}^{-k}(D(m,2r)) = \wt{KO}^{-k}(m,2r) \oplus p^!\wt{KO}^{-k}(\brp^m),\] of the  Theorem \ref{decomp}, we  conclude that $\s^kD(m,2r)$ is $W$-trivial.
\end{proof}

\begin{Prop}\label{prop3}
Let $m > 0$. Then $\s^kD(m,2r)$ is $W$-trivial if
\begin{enumerate}
\item $k = 3$ and $m = 8t$.
\item $k = 5$ and $m = 8t+2$.
\item $k = 6$ and $m = 8t+3$.
\item $k = 7$ and $m = 8t+4$.
\item $k = 8$ and $m = 1$.
\end{enumerate}
\end{Prop}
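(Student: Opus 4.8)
The plan is to deduce Proposition~\ref{prop3} by bootstrapping off Proposition~\ref{prop2} rather than by a vanishing argument, since in each of the five cases neither $\wt{KO}^{-k}(m,2r)$ nor $\wt{KO}^{-k}(m-1,2r)$ need be zero. Using the splitting
\[\wt{KO}^{-k}(D(m,2r)) = \wt{KO}^{-k}(m,2r) \oplus p^!\wt{KO}^{-k}(\brp^m)\]
of Theorem~\ref{decomp} together with the multiplicativity of $W$, it is enough to prove $W(\xi)=1$ for every $\xi$ in the summand $\wt{KO}^{-k}(m,2r)$: in all five cases $\s^k\brp^m$ is $W$-trivial (Theorem~1.4 \cite{tanaka_2010}; for $k=8,m=1$ this is the sphere $S^9$), so the factor coming from $p^!\wt{KO}^{-k}(\brp^m)$ already contributes $1$.

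The key step is to restrict along the inclusion $i\colon D(m-1,2r)\hookrightarrow D(m,2r)$ occurring in the exact sequence \eqref{(m,n)}. First I would check that the pair $(k,m-1)$ always falls under Proposition~\ref{prop2}: for $k=3,5,6,7$ with $m=8t,\,8t+2,\,8t+3\ (t\geq 1),\,8t+4$ the residues are $m-1\equiv 7,\,1,\,2,\,3\pmod 8$ respectively; while for $k=8,m=1$ the relevant base is $D(0,2r)=\bcp^{2r}$, with $\s^8\bcp^{2r}$ being $W$-trivial by Theorem~1.5 \cite{tanaka_2010}. Granting that $\s^kD(m-1,2r)$ is $W$-trivial, naturality of the total Stiefel-Whitney class gives $(\s^ki)^{*}W(\xi)=W(i^!\xi)=1$. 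As $W(\xi)=1+\sum_{j>0}w_j(\xi)$, comparison of degrees forces $w_j(\xi)\in\ker(\s^ki)^{*}$ for every $j$. From $H^{*}(D(m,2r);\bz_2)=\bz_2[c,d]/(c^{m+1},d^{2r+1})$ with $i^{*}(c)=c$ and $i^{*}(d)=d$, this kernel is spanned by the classes $c^md^b$ ($0\leq b\leq 2r$), which after the $k$-fold suspension sit precisely in the degrees $m+k+2b$.

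The punchline is a parity count. In all five cases $m+k$ is odd --- namely $8t+3$, $8t+7$, $8t+9$, $8t+11$ and $9$ --- so every degree $m+k+2b$ in which a component $w_j(\xi)$ could be nonzero is odd and at least $7$. If $W(\xi)$ were nontrivial, its lowest nonzero Stiefel-Whitney class would, by the first of the facts recalled at the start of this section, occur in a degree that is a power of $2$; but the only odd power of $2$ is $1$, while every admissible degree exceeds $1$. This contradiction gives $W(\xi)=1$ on the summand $\wt{KO}^{-k}(m,2r)$, and with the first paragraph this shows $\s^kD(m,2r)$ is $W$-trivial.

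The step I expect to be the real obstacle is not the parity count but securing its hypothesis, namely that $W(\xi)-1$ is confined to the ideal $(c^m)$; this rests entirely on the prior $W$-triviality of $D(m-1,2r)$ furnished by Proposition~\ref{prop2}, and it is precisely this input that fails at $k=6,m=3$, where $D(2,2r)$ is not $W$-trivial by Theorem~\ref{main1}. Hence the case $k=6$ must be read with $t\geq 1$, and the breakdown at $t=0$ is consistent with $m=3$ being genuinely non-$W$-trivial. I would also record that the power-of-$2$ fact is applied to a genuine bundle representing the stable class $\xi$, so that $W(\xi)=W(E)$ for an honest bundle $E$ over $\s^kD(m,2r)$; this presents no difficulty, since the governing Wu relation $Sq^{a}w_i=\binom{i-1}{a}\,w_{i+a}$ for a first nonzero class $w_i$ involves no cup products.
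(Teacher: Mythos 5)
Your proposal is correct and is essentially the paper's own argument: restrict along $i\colon D(m-1,2r)\hookrightarrow D(m,2r)$, observe that $\ker i^*$ is concentrated in degrees congruent to $m\pmod 2$ (the paper deduces this from the cofiber $D(m,2r)/D(m-1,2r)\approx \s^m\bcp^{2r}\vee S^m$, you from the explicit ring structure; your preliminary $KO$-splitting reduction is harmless but not needed), and then contradict the power-of-$2$ fact using the $W$-triviality of $\s^kD(m-1,2r)$ furnished by Proposition \ref{prop2} for $k\neq 8$ and Theorem 1.5 of \cite{tanaka_2010} for $k=8$. Your caveat that case (3) requires $t\geq 1$ is well taken: the paper's proof likewise rests on Proposition \ref{prop2}(3), which excludes $m-1=2$, and indeed $\s^6D(3,n)$ is not $W$-trivial by Theorem \ref{main1}(4), so the statement there must be read with $t>0$.
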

\begin{proof}Let $k$ and $m$ be as in the statement of the proposition.
Consider the inclusion, \[i: D(m-1,2r) \hookrightarrow D(m,2r)\]
Note that  the quotient $D(m,2r)/D(m-1,2r) \approx \s^{m}\bcp^{2r} \vee S^{m}$, and hence  the induced map \[i^*: H^p(D(m,2r);\bz_2) \rightarrow H^p( D(m-1,2r);\bz_2)\] is monomorphism when $p+m$ is odd.  Thus the induced suspension map \[{\s^qi}^*: H^q(\s^kD(m,2r);\bz_2) \rightarrow H^q(\s^kD(m-1,2r);\bz_2)\] is monomorphism for even $q$.

Now, if there is a vector bundle $\xi$ over $\s^kD(m,2r)$ with $w_{2^s}(\xi) \neq 0$ for some $s >0$, then $w_{2^s}(i^*(\xi)) \neq 0$. But we know that $\s^kD(m-1,2r)$ is $W$-trivial (refer Proposition \ref{prop2} for $k\neq 8$  and  Theorem 1.5 \cite{tanaka_2010} for $k=8$). This gives a contradiction and thus we conclude that $\s^kD(m,2r)$ is $W$-trivial.
\end{proof}

%
%
%
%
%

\begin{Prop}\label{prop4}
The iterated  suspensions $\s^8D(2,2r)$ and $\s^8D(3,2r)$ are $W$-trivial.
\end{Prop}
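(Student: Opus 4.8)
The plan is to handle $m=3$ and $m=2$ separately, reducing the first to the second. For $\s^8 D(3,2r)$ I would run the argument of Proposition \ref{prop3} unchanged: by the homeomorphism $h_1$ one has $D(3,2r)/D(2,2r)\approx \s^3\bcp^{2r}\vee S^3$, so the inclusion $i\colon D(2,2r)\hookrightarrow D(3,2r)$ induces a monomorphism $i^*\colon H^p(D(3,2r);\bz_2)\to H^p(D(2,2r);\bz_2)$ whenever $p+3$ is odd; consequently $\s^8 i^*$ is a monomorphism in every even degree, the parity working out exactly because $m+k=3+8$ is odd. If $\s^8 D(3,2r)$ were not $W$-trivial, some bundle $\xi$ would carry a nonzero Stiefel--Whitney class, and by the power-of-$2$ fact the smallest such index is a power of $2$ which, by $7$-connectivity of $\s^8 D(3,2r)$, is at least $8$ and in particular even. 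Then $\s^8 i^*(w_{2^s}(\xi))=w_{2^s}(i^*\xi)\neq 0$ contradicts the $W$-triviality of $\s^8 D(2,2r)$, so the case $m=3$ follows once $m=2$ is known.

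For $m=2$ this monomorphism argument breaks down, since $m+k=2+8$ is even and the inclusion $D(1,2r)\hookrightarrow D(2,2r)$ gives injectivity only in odd degrees. Instead I would start from the splitting of Theorem \ref{decomp},
\[
\wt{KO}^{-8}(D(2,2r))=\wt{KO}^{-8}(2,2r)\oplus p^!\wt{KO}^{-8}(\brp^2).
\]
The second summand is immediate: its bundles have Stiefel--Whitney classes pulled back from $\s^8\brp^2$, which is $W$-trivial by Theorem 1.4 of \cite{tanaka_2010}. For the first summand I would invoke the exact sequence \eqref{(m,n)} with $k=8$, $m=2$,
\[
\wt{KO}^{-8}(S^2\wedge\bcp^{2r})\xrightarrow{f^!}\wt{KO}^{-8}(2,2r)\xrightarrow{i^!}\wt{KO}^{-8}(1,2r),
\]
exactly as in Proposition \ref{prop2}. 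Provided $\wt{KO}^{-8}(1,2r)=0$, the map $f^!$ is onto, so every $\xi\in\wt{KO}^{-8}(2,2r)$ is pulled back along $\s^8 f$ from $\s^8(S^2\wedge\bcp^{2r})$; the latter is a $10$-fold suspension and hence $W$-trivial by Atiyah--Hirzebruch \cite[Theorem 2]{AH}, giving $W(\xi)=(\s^8 f)^*W=1$. Thus $\s^8 D(2,2r)$ would be $W$-trivial. Note that here the role played in Proposition \ref{prop2} by Theorem 1.5 of \cite{tanaka_2010} is taken over, for free, by the fact that $\s^{k+m}\bcp^{2r}=\s^{10}\bcp^{2r}$ is $W$-trivial on dimensional grounds alone.

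The crux, and the reason this case must be separated from Proposition \ref{prop2} and Lemma \ref{zero}, is the vanishing $\wt{KO}^{-8}(1,2r)=0$: since $k=8\equiv0\pmod 8$ is not among the residues $k=2,3,5,6,7$ treated in Lemma \ref{zero}, I would have to extract it from Theorem 3 of \cite{fujiidold} by counting the generators of $\wt{KO}^{-8}(1,2r)$, in the same way Lemma \ref{zero} was deduced. I expect this $KO$-theoretic bookkeeping to be the main obstacle. Should the group fail to vanish, the argument becomes genuinely delicate: restricting $\xi$ to the $W$-trivial space $\s^8 D(1,2r)$ (Proposition \ref{prop3}(5)) kills all odd Stiefel--Whitney classes, $7$-connectivity forces $w_8=0$, and the power-of-$2$ fact then reduces everything to the even ``mixed'' classes $w_{2^s}=\epsilon\,\s^8\!\left(c^2 d^{\,2^{s-1}-5}\right)$ with $2^s\ge 16$. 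Each of these lies in $\ker i^*$ and restricts to zero both on the fibre $\bcp^{2r}$ (where $c\mapsto 0$) and on the subspace $D(1,2r)$ (where $c^2=0$), so no elementary restriction detects them; their vanishing is intrinsically a $K$-theoretic statement, which is exactly why establishing $\wt{KO}^{-8}(1,2r)=0$ is the decisive input.
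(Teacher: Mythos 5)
Your reduction of the $m=3$ case to the $m=2$ case is correct and is exactly the paper's argument. The genuine gap is in the $m=2$ case: your whole argument there is conditional on the vanishing $\wt{KO}^{-8}(1,2r)=0$, which you defer to ``bookkeeping'' in Theorem 3 of \cite{fujiidold}, and that vanishing is \emph{false} for every $r\geq 1$. By Bott periodicity $\wt{KO}^{-8}(1,2r)\cong\wt{KO}^{0}(1,2r)$, and this group has rank $r$: rationally $H^{*}(D(1,2r);\mathbb{Q})$ is the $\bz_2$-invariant part of $H^{*}(S^{1}\times\bcp^{2r};\mathbb{Q})$, which contains one copy of $\mathbb{Q}$ in each degree $4,8,\ldots,4r$ (the classes $d^{2j}$ are conjugation-invariant); the Pontryagin character then gives $\wt{KO}^{0}(D(1,2r))\otimes\mathbb{Q}\cong\mathbb{Q}^{r}$, and all of this lies in the summand $\wt{KO}^{0}(1,2r)$ of Theorem \ref{decomp} because $\wt{KO}^{0}(\brp^{1})=\bz_2$ is torsion. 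This is consistent with the paper itself: Lemma \ref{zero} lists no $k\equiv 0\pmod 4$, and Proposition \ref{main5} records $\wt{KO}^{-4}(1,n)=\bz^{r}\neq 0$; the same rational obstruction rules out vanishing in every degree $k\equiv 0 \pmod 4$. So the Proposition \ref{prop2}-style surjectivity mechanism is simply unavailable for $k=8$, $m=2$, and your fallback sketch stops exactly where a proof would have to begin.

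The missing idea is that Steenrod squares, not restrictions and not $KO$-computations, kill the mixed classes. The paper's proof for $\s^{8}D(2,2r)$ runs as follows: if $w_{j}(\xi)=0$ for $0<j<2^{s}$ with $s\geq 4$ (the case $2^{s}=8$ is vacuous since $w_{8}$ desuspends into $\wt{H}^{0}(D(2,2r);\bz_2)=0$), let $a\in H^{2^{s}-8}(D(2,2r);\bz_2)$ desuspend $w_{2^{s}}(\xi)$; by Lemma 3.3 of \cite{tanaka_2010} and commutation of Steenrod squares with suspension, $Sq^{i}a=0$ for $0<i<2^{s-1}$. Writing $a=x\,d^{2^{s-1}-4}+y\,c^{2}d^{2^{s-1}-5}$, restriction to the fibre kills $x$ (as you note, $\s^{8}\bcp^{2r}$ is $W$-trivial), while $Sq^{2}\bigl(c^{2}d^{2^{s-1}-5}\bigr)=c^{2}d^{2^{s-1}-4}\neq 0$ --- by the Cartan formula, $Sq^{1}d=cd$, $c^{3}=0$, and the fact that $2^{s-1}-5$ is odd --- so the constraint $Sq^{2}a=0$ forces $y=0$. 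Thus precisely the classes you declared ``intrinsically $K$-theoretic'' are detected by $Sq^{2}$; that Wu-formula constraint is the decisive input, and without it (or some substitute) your proposal does not prove the $m=2$ case, and hence, since your $m=3$ argument relies on it, neither case.
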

\begin{proof}
We shall first prove that $\s^8D(2,2r)$ is $W$-trivial. Let $\xi$ be a vector bundle over $\s^8D(2,2r)$. Let $s \geq 4$ be such that $2^s \leq \dim(\s^8D(2,2r))$ and  $w_{j}(\xi) = 0$ for $0 < j < 2^s $. We shall show that $w_{2^s}(\xi) = 0$ and thus this will imply that  $\s^8D(2,2r)$ is  $W$-trivial. 

Let $a \in H^{2^s -8}(D(2,2r);\bz_2)$ be the cohomology class which maps to $w_{2^s}(\xi) \in H^{2^s}(\s^8D(2,2r);\bz_2)$ under the suspension isomorphism
\[ H^{2^s -8}(D(2,2r);\bz_2) \rightarrow  H^{2^s}(\s^8D(2,2r);\bz_2).\] 
By Lemma 3.3 of \cite{tanaka_2010} and the fact that the Steenrod squares commutes with suspension homomorphism we have
\[Sq^i(a) = 0 \mbox{ for all }  0 < i < 2^{s-1}.\]
Now observe that the vector space $ H^{2^s -8}(D(2,2r);\bz_2)$ is generated by \[d^{2^{s-1}- 4} \mbox{ and } c^2d^{2^{s-1}-5}.\] Therefore, \[a = x \cdot d^{2^{s-1}} + y \cdot c^2d^{2^{s-1}}\] for $x,y \in \bz_2$. If $x \neq 0$ then $w_{2^s}(i^*\xi) \neq 0$, where \[ i : \s^8\bcp^{2r} \hookrightarrow \s^8 D(2,2r)\] is the inclusion map and $i^*\xi$ is the pullback bundle over $\s^8\bcp^{2r}$. But since $\s^8\bcp^{2r}$ is $W$-trivial (Theorem 1.5 \cite{tanaka_2010}), we have a contradiction and hence $x = 0$. Further, since \[Sq^2(c^2d^{2^{s-1}}) = c^2d^{2^{s-1}-4} \neq 0,\]
 we have $y = 0$. Hence $w_{2^s}(\xi) = 0$. This completes the proof of $W$-triviality of $\s^8D(2,2r)$.

The proof of $W$-triviality of $\s^8D(3,2r)$ proceeds along the same lines as the proof of Proposition \ref{prop3}  using the fact that $\s^8D(2,2r)$ is $W$-trivial.
\end{proof}

In the following proposition, $n$ can be both even or odd.
\begin{Prop}\label{prop5}
The iterated suspensions $\s^4D(3,n)$ and $\s^8D(7,n)$  are $W$-trivial.
\end{Prop}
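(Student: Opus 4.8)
The plan is to run the argument of Proposition \ref{prop4} in both cases at once. Given a vector bundle $\xi$ over $\s^k D(m,n)$, with $(k,m)=(4,3)$ or $(8,7)$, I would take the smallest $s$ with $w_{2^s}(\xi)\neq 0$ (the lowest nonzero Stiefel--Whitney class occurs in a power-of-two degree) and derive a contradiction. Let $a\in H^{2^s-k}(D(m,n);\bz_2)$ be the class carried onto $w_{2^s}(\xi)$ by the suspension isomorphism. Because $w_j(\xi)=0$ for $0<j<2^s$, Lemma 3.3 of \cite{tanaka_2010} together with the compatibility of Steenrod squares with suspension gives $Sq^i(a)=0$ for all $0<i<2^{s-1}$. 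The whole proof then becomes the algebraic assertion that, in $H^*(D(m,n);\bz_2)=\bz_2[c,d]/(c^{m+1},d^{n+1})$, no nonzero class of degree $2^s-k$ is killed by $Sq^1,\dots,Sq^{2^{s-1}-1}$. The degrees $2^s-k\le 0$ (small $s$) are immediate since the corresponding reduced group vanishes, so I only treat $2^s-k>0$.

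For $\s^4D(3,n)$ the degree $2^s-4$ is even, so $a=x\,d^{2^{s-1}-2}+y\,c^2d^{2^{s-1}-3}$, discarding any monomial whose $d$-exponent exceeds $n$. From $Sq^1c=c^2$, $Sq^1d=cd$ and the Cartan formula one finds $Sq^1(c^2d^{2^{s-1}-3})=c^3d^{2^{s-1}-3}$ while $Sq^1(d^{2^{s-1}-2})=0$; as $c^3\neq 0$ in $D(3,n)$, the relation $Sq^1a=0$ forces $y=0$. Then $Sq^2(d^{2^{s-1}-2})=c^2d^{2^{s-1}-2}$ (the coefficient $\binom{2^{s-1}-2}{2}$ being odd), and since $c^2\neq 0$ the relation $Sq^2a=0$ forces $x=0$. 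Hence $a=0$, a contradiction. I note that the monomial detecting each coefficient carries the same power of $d$ as the monomial it detects, so the boundary values of $n$ (where the top $d$-powers vanish) need no separate discussion.

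For $\s^8D(7,n)$ I run the same mechanism with the four even-in-$c$ basis monomials, writing $a=x_0d^{2^{s-1}-4}+x_2c^2d^{2^{s-1}-5}+x_4c^4d^{2^{s-1}-6}+x_6c^6d^{2^{s-1}-7}$. The formula $Sq^1(c^{2i}d^{j})=j\,c^{2i+1}d^{j}$ yields $Sq^1a=x_2c^3d^{2^{s-1}-5}+x_6c^7d^{2^{s-1}-7}$, and since $c^3,c^7\neq 0$ in $D(7,n)$ this gives $x_2=x_6=0$. A short Cartan computation then reduces $Sq^2a=0$ to $x_4c^6d^{2^{s-1}-6}=0$, so $x_4=0$ (using $c^6\neq 0$). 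Finally $Sq^4(d^{2^{s-1}-4})=c^4d^{2^{s-1}-4}$ --- the mod-$2$ binomial coefficients single out the $c^4$ term --- and $c^4\neq 0$, so $Sq^4a=0$ gives $x_0=0$. The needed square $Sq^4$ satisfies $4<2^{s-1}$ exactly when $s\ge 4$, which is precisely the range in which $2^s-8>0$, so it is available whenever required; thus $a=0$ again.

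The one substantive point is that in each case the ``pure $d$-power'' coefficient ($x$, resp.\ $x_0$) is detected by the single square $Sq^{k/2}$, whose value is a nonzero multiple of $c^{k/2}d^{\,\cdot}$ precisely because $m$ is large enough that $c^{k/2}\neq 0$; moreover the largest power of $c$ occurring anywhere in the computation is $c^{k-1}$, which is nonzero exactly because $m=k-1$. This is why the values $(k,m)=(4,3)$ and $(8,7)$ are the ones that work. Consequently the only genuine obstacle is the bookkeeping of the Steenrod action on $\bz_2[c,d]/(c^{m+1},d^{n+1})$: checking, via the Cartan formula and Lucas' theorem for the binomial coefficients modulo $2$, that each square isolates its intended monomial and that the detecting monomials are linearly independent. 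The reduction to this algebraic statement, the vanishing in low degrees, and the boundary values of $n$ are all formal.
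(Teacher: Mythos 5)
Your proof is correct, and its skeleton --- desuspend $w_{2^s}(\xi)$ to a class $a$, use Lemma 3.3 of \cite{tanaka_2010} and compatibility of Steenrod squares with suspension to get $Sq^i a=0$ for $0<i<2^{s-1}$, expand $a$ in the monomial basis of $H^{2^s-k}(D(m,n);\bz_2)$, and kill the coefficients one by one --- is exactly the paper's. You diverge at one substantive point, however, and your variant is the stronger one. The paper's proof of Proposition \ref{prop5} only records the squares detecting the monomials that contain $c$ (namely $Sq^1$ on $c^2d^{2^{s-1}-3}$; resp.\ $Sq^1$ on $c^2d^{2^{s-1}-5}$ and $c^6d^{2^{s-1}-7}$, and $Sq^2$ on $c^4d^{2^{s-1}-6}$), and for everything else refers to the template of Proposition \ref{prop4}, where the coefficient of the pure power of $d$ is eliminated by restricting to the fibre inclusion $\s^8\bcp^{2r}\hookrightarrow\s^8D(2,2r)$ and invoking $W$-triviality of $\s^8\bcp^{2r}$ (Theorem 1.5 of \cite{tanaka_2010}). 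You instead detect the pure $d$-powers with further squares, $Sq^2(d^{2^{s-1}-2})=c^2d^{2^{s-1}-2}$ and $Sq^4(d^{2^{s-1}-4})=c^4d^{2^{s-1}-4}$ (both binomial coefficients odd, both images nonzero precisely when the monomial being detected is nonzero). This is not a cosmetic change: for $k=4$ the fibre-restriction step of Proposition \ref{prop4} is simply unavailable, because $\s^4\bcp^n$ is \emph{not} $W$-trivial for $n>1$, $n\not\equiv 3\pmod 4$ --- the very fact the paper exploits in Proposition \ref{main5} --- so pulling $\xi$ back to $\s^4\bcp^n$ yields no contradiction there. Your purely Steenrod-theoretic treatment therefore supplies the step that the paper leaves implicit (and that its own template cannot furnish for $\s^4D(3,n)$), makes the proposition independent of Theorem 1.5 of \cite{tanaka_2010}, and isolates the real reason the pairs $(k,m)=(4,3),(8,7)$ work: $c^{k/2}\neq 0$ because $m=k-1\geq k/2$. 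What the paper's route buys in exchange is only brevity in the $k=8$ case, where the appeal to $W$-triviality of the suspended fibre does go through.
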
 
\begin{proof}Let $s\geq 3$ be such that $2^s \leq \dim(\s^4D(3,n))$.  Observe that  the vector space $H^{2^s - 4} (P(3,n);\bz_2)$ is generated by \[d^{2^{s-1} -2} \mbox { and } c^2d^{2^{s-1} -3}.\] Here note that $d^{2^{s-1} -2}$ will be zero if $n > 2^s -4$. Further observe that \[Sq^1(c^2d^{2^{s-1} -3}) = c^3d^{2^{s-1} -3 }\neq 0.\] With these observations the proof of the $W$-triviality of $\s^4D(3,n)$ proceeds along the same lines as the proof of $W$-triviality of $\s^8D(2, 2r)$ in the Proposition \ref{prop4}.

Similarly we can argue that $\s^8D(7,n)$ is $W$-trivial. Here we need to observe that, for $s \geq 4$ such that $2^s \leq \dim(\s^8D(7,n))$, the vector space $H^{2^s - 8} (P(7,n);\bz_2)$ is generated by 
\[d^{2^{s-1} -4},c^2d^{2^{s-1} - 5}, c^4d^{2^{s-1}-6} \mbox{ and } c^6d^{2^{s-1}-7}.\] Here again some of these cohomology classes, except $c^6d^{2^{s-1}-7}$, can be zero. Further observe that  $Sq^1(c^2d^{2^{s-1} - 5}) = c^3d^{2^{s-1} - 5}$, $Sq^1(c^6d^{2^{s-1} - 7}) = c^7d^{2^{s-1} - 7}$ and $Sq^1(c^4d^{2^{s-1} - 6}) = 0$ but $Sq^2(c^4d^{2^{s-1} -6})=c^6d^{2^{s-1} -6}$. Now the proof of $W$-triviality of $\s^8D(7,n)$ will proceed along the same lines as in the case $\s^4D(3,n)$. This completes the proof of the proposition.
\end{proof}

\begin{Rem}{\em
More generally one can prove that the $m$-fold suspension $\s^mD(m-1,n)$ of the Dold manifold $D(m-1,n)$ with $m>0$ is $W$-trivial by the method used in  proving the Proposition \ref{prop5}}.
\end{Rem}
The proof of the Theorem \ref{main2} follows from Propositions \ref{prop1}, \ref{prop2}, \ref{prop3}, \ref{prop4} and \ref{prop5}. 

We now come to the proof of the  Theorem \ref{main3}. First we make the following observation concerning the $W$-triviality of stunted projective space.


%
%
%
%
%

%

\begin{Lem} \label{stunted2}
Let $\brp^m/\brp^n$ be the stunted projective space with $m\geq n$. Then $\s^k(\brp^m/\brp^n)$ is $W$-trivial if 
\begin{enumerate}
\item $k= 1, 2,4 \mbox{ or }8$ and $m < k$.
\item $k=3,5 \mbox{ or } 7$ and $m+k \neq 4,8$.
\item $k = 6$ and $m \neq 2,3$. 
\item $k \geq 9$.
\end{enumerate}

\end{Lem}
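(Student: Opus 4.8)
The plan is to reduce the assertion to the known $W$-triviality of $\s^k\brp^m$ by exploiting the collapse map $q:\brp^m\to\brp^m/\brp^n$. The starting observation is that, for $k\geq 1$, the four numerical conditions (1)--(4) in the statement are exactly the conditions under which $\s^k\brp^m$ is itself $W$-trivial: this is Theorem 1.4 of \cite{tanaka_2010}, and one checks that these ranges are the complements (within $k\geq 1$) of the non-$W$-trivial cases recorded in Theorem \ref{main1} for $D(m,0)=\brp^m$. In particular the case $k\geq 9$ is in any event immediate from the Atiyah-Hirzebruch theorem \cite[Theorem 2]{AH}. Thus it suffices to transport $W$-triviality from $\s^k\brp^m$ to $\s^k(\brp^m/\brp^n)$.

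The key cohomological input I would establish next is the injectivity of the collapse map. From the long exact sequence of the pair $(\brp^m,\brp^n)$ with $\bz_2$-coefficients, together with $H^j(\brp^n;\bz_2)=0$ for $j>n$, one reads off that $\wt{H}^j(\brp^m/\brp^n;\bz_2)=\bz_2$ for $n+1\leq j\leq m$ and vanishes otherwise, and that $q^*$ sends the generator in degree $j$ to $c^j\in H^j(\brp^m;\bz_2)$. Since the classes $c^{n+1},\ldots,c^m$ are part of the $\bz_2$-basis of $H^*(\brp^m;\bz_2)=\bz_2[c]/(c^{m+1})$, the homomorphism
\[ q^*:\wt{H}^*(\brp^m/\brp^n;\bz_2)\to \wt{H}^*(\brp^m;\bz_2) \]
is a monomorphism. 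By naturality of the cohomology suspension isomorphism, its $k$-fold suspension
\[ (\s^k q)^*:\wt{H}^*(\s^k(\brp^m/\brp^n);\bz_2)\to \wt{H}^*(\s^k\brp^m;\bz_2) \]
is again a monomorphism.

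The remainder is a pullback argument of the same flavour as the proof of Theorem \ref{main1}, but run in the reverse logical direction. Given any vector bundle $\xi$ over $\s^k(\brp^m/\brp^n)$, I would form its pullback $(\s^k q)^*\xi$ over $\s^k\brp^m$, so that $w_i\big((\s^k q)^*\xi\big)=(\s^k q)^*\big(w_i(\xi)\big)$ for every $i$ by naturality of the Stiefel-Whitney classes. Under hypotheses (1)--(4) the space $\s^k\brp^m$ is $W$-trivial, hence $(\s^k q)^*\big(w_i(\xi)\big)=0$ for all $i>0$; the injectivity of $(\s^k q)^*$ then forces $w_i(\xi)=0$ for all $i>0$, i.e. $W(\xi)=1$. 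As $\xi$ was arbitrary, $\s^k(\brp^m/\brp^n)$ is $W$-trivial.

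The argument is essentially formal once these two ingredients are in place, so I do not anticipate a genuine obstacle. The only point demanding care is the bookkeeping at the first step: verifying that the numerical ranges (1)--(4) really do fall inside the $W$-triviality range of $\s^k\brp^m$ furnished by Theorem 1.4 of \cite{tanaka_2010}, rather than merely overlapping it, since the whole reduction rests on $\s^k\brp^m$ being $W$-trivial in exactly these cases.
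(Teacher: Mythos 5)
Your proof is correct and follows essentially the same route as the paper: both pull bundles back along the collapse map $\brp^m\to\brp^m/\brp^n$, use injectivity of the induced (suspended) map on $\bz_2$-cohomology, and then invoke Theorem 1.4 of \cite{tanaka_2010} for the $W$-triviality of $\s^k\brp^m$ in exactly the stated ranges. The only cosmetic difference is that you verify the injectivity via the long exact sequence of the pair, while the paper simply records that the map is an isomorphism in degrees above $n+k$.
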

\begin{proof}
Let $X=\brp^m/\brp^n$. Let $\alpha:\brp^m \rightarrow X$ be the projection map. Then the induced suspension homomorphsim \[(\s^k \alpha)^*:H^i(\s^kX;\bz_2) \rightarrow H^i(\s^k\brp^m;\bz_2),\] is an isomorphism  for $i >n+k$. Hence if there is a vector bundle $\xi$ over $\s^kX$ with $w_i(\xi) \neq 0$ then $w_i((\s^k\alpha)^*\xi) \neq 0$. Thus the $W$-triviality of $\s^k\brp^m$ implies the $W$-triviality of $\s^kX$. Now the proof of the lemma follows from the Theorem 1.4 \cite{tanaka_2010}.
\end{proof}

\begin{Prop}\label{odd}
Let $\s^kD(m,2r+1)$ be $k$-fold suspension of the Dold manifold $D(m,2r+1)$. Then  $\s^kD(m,2r+1)$ is $W$-trivial if $\s^kD(m,2r+2)$ and $\s^{2r+1+k} (\brp^{2r+1+m}/\brp^{2r})$ are $W$-trivial.
\end{Prop}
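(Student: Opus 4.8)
The plan is to combine the two splitting results, Theorem \ref{decomp} and Theorem \ref{odd_decomp}, to express $\wt{KO}^{-k}(D(m,2r+1))$ as a direct sum of three pieces, and then to check that each piece carries a trivial total Stiefel-Whitney class. Throughout I use the standard facts that $\wt{KO}^{-k}(X) \cong \wt{KO}(\s^kX)$, that $W$ depends only on the stable class of a bundle, and that $W$ is multiplicative over Whitney sums, so it suffices to verify $W = 1$ on each summand and multiply. Explicitly, Theorem \ref{decomp} gives $\wt{KO}^{-k}(D(m,2r+1)) = \wt{KO}^{-k}(m,2r+1) \oplus p^!\wt{KO}^{-k}(\brp^m)$, and the splitting $\kappa$ behind Theorem \ref{odd_decomp} refines the first summand to $\wt{KO}^{-k}(m,2r+1) = \kappa\,\wt{KO}^{-k}(m,2r) \oplus \pi^!\wt{KO}^{-k}(D(m,2r+1)/D(m,2r))$. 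Thus any $\xi$ over $\s^kD(m,2r+1)$ is stably a sum $\xi = \alpha \oplus \beta \oplus \nu$ with $\alpha \in \kappa\,\wt{KO}^{-k}(m,2r)$, $\beta \in \pi^!\wt{KO}^{-k}(D(m,2r+1)/D(m,2r))$ and $\nu \in p^!\wt{KO}^{-k}(\brp^m)$.

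For the middle summand I would invoke the homeomorphism $h_2$, which identifies $\s^k(D(m,2r+1)/D(m,2r))$ with $S^{2r+1+k} \wedge (\brp^{2r+1+m}/\brp^{2r}) = \s^{2r+1+k}(\brp^{2r+1+m}/\brp^{2r})$ (the $(2r{+}1)$-fold suspension from $S^{2r+1}$ smashes with the outer $\s^k$). Since $\beta = \pi^!(\gamma)$ is the pullback along $\s^k\pi$ of a class $\gamma$ over this space, naturality gives $w_i(\beta) = (\s^k\pi)^*w_i(\gamma)$; as $\s^{2r+1+k}(\brp^{2r+1+m}/\brp^{2r})$ is $W$-trivial by hypothesis, every $w_i(\gamma)$ vanishes and so $W(\beta) = 1$.

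For the two remaining summands the key point is that both lie in the image of the restriction $i_2^!\colon \wt{KO}^{-k}(D(m,2r+2)) \to \wt{KO}^{-k}(D(m,2r+1))$ induced by the inclusion $i_2\colon D(m,2r+1)\hookrightarrow D(m,2r+2)$. Indeed $\kappa = i_2^!\circ p$ by its very definition, so $\alpha$ is $i_2^!$ of a class over $D(m,2r+2)$; and since the projections to $\brp^m$ satisfy $p\circ i_2 = p$, functoriality of the pullback puts $\nu \in p^!\wt{KO}^{-k}(\brp^m)$ into the image of $i_2^!$ as well. Hence $\alpha$ and $\nu$ are pullbacks along $\s^ki_2$ of bundles over $\s^kD(m,2r+2)$, which is $W$-trivial by hypothesis, and naturality forces $W(\alpha) = W(\nu) = 1$. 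Multiplying, $W(\xi) = W(\alpha)W(\beta)W(\nu) = 1$.

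I expect the only real obstacle to be the bookkeeping of the previous paragraph: one must check, from the construction of $\kappa$ in Section 10 of \cite{fujiidold} and from the relation $p\circ i_2 = p$, that the $i_2^!$ occurring in $\kappa$ is genuinely the geometric pullback restricted to the relevant summands, so that the two algebraic pieces really do factor through $\wt{KO}^{-k}(D(m,2r+2))$. Granting that identification, the Stiefel-Whitney computations are immediate consequences of naturality and the Whitney product formula.
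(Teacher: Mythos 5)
Your proof is correct and follows essentially the same route as the paper: the same two splittings (Theorems \ref{decomp} and \ref{odd_decomp}) reduce to three stable summands, the stunted projective space summand is handled by naturality via $h_2$, and the $\kappa$-summand is pushed into the image of $i_2^!$ using $\kappa = i_2^!\circ p$ so that $W$-triviality of $\s^kD(m,2r+2)$ applies. The only cosmetic difference is your treatment of the $p^!\wt{KO}^{-k}(\brp^m)$ piece via $p\circ i_2 = p$, where the paper instead notes that $W$-triviality of $\s^kD(m,2r+2)$ already implies that of $\s^k\brp^m$; both reduce that summand to the same hypothesis.
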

\begin{proof}
Let $\xi$ be a vector bundle over $\s^kD(m,2r+1)$. We shall prove the proposition by showing that the total Stiefel-Whitney class $W(\xi) = 1$.

Consider the following decomposition by Theorem \ref{decomp},
\[\wt{KO}^{-k}(D(m,2r+1)) = \wt{KO}^{-k}(m,2r+1) \oplus \wt{KO}^{-k}(\brp^m).\]
Since the $W$-triviality of $\s^kD(m,2r+2)$ implies the $W$-triviality of  $\s^k\brp^m$ (Theorem \ref{decomp}), we can assume that $\xi \in \wt{KO}^{-k}(m,2r+1)$. Now, since the $(2r+1+k)$-fold suspension $\s^{2r+1+k} (\brp^{2r+1+m}/\brp^{2r})$ is $W$-trivial and by the decomposition,  
       \[\wt{KO}^{-k}(m,2r+1) = \wt{KO}^{-k}(m,2r) \oplus \wt{KO}^{-k}(\s^{2r+1}(\brp^{2r+1+m}/\brp^{2r})),\]
of the Theorem \ref{odd_decomp}, we can further assume that $\xi = \kappa(\gamma)$ for some $\gamma \in \wt{KO}^{-k}(m,2r)$. Here $\kappa$ is the monomorphism with respect to which $\wt{KO}^{-k}(m,2r)$ is direct summnad of $\wt{KO}^{-k}(m,2r+1)$ (refer to section 10 of \cite{fujiidold} for more details). By the definition of  $\kappa$, we have $\xi = i_2^!(\eta)$ for some $\eta \in \wt{KO}^{-k}(m,2r+2)$. Here $i_2:D(m,2r+1) \hookrightarrow D(m,2r+2)$ is the inclusion. Thus, by the $W$-triviality of $\s^k D(m,2r+2)$, we conclude that $W(\xi) = 1$. This completes the proof of the proposition. 
\end{proof}

%

Now, under the given hypothesis on $k, n =2r+1$ and $m$, as in statement of  the Theorem \ref{main3}, the $k$-fold suspension $\s^kD(m,2r+2)$ is $W$-trivial for all $r\geq 0$ (Theorem \ref{main2}). It is also clear by the Lemma \ref{stunted2} that for these values of $k,n=2r+1$ and $m$, the space  $\s^{2r+1+k} (\brp^{2r+1+m}/\brp^{2r})$ is $W$-trivial. Thus the proof of the Theorem \ref{main3} follows from the Proposition \ref{odd}. 

We now come to the proof of  the Theorem \ref{main4}. First note the following lemma.
\begin{Lem} \label{stunted1}
The $k$-fold suspension $\s^kD(m,n)$ is not $W$-trivial if the $(n+k)$-suspension $\s^{n+k} (\brp^{n+m}/\brp^{n-1})$ is not $W$-trivial.
\end{Lem}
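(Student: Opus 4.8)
The plan is to reduce the $W$-triviality question on $\s^kD(m,n)$ to the same question on a quotient that is homeomorphic to a suspension of a stunted projective space, via the homeomorphism $h_2$ recalled in Section \ref{pre}. The key structural fact is $h_2 \colon D(m,n)/D(m,n-1) \approx S^n \wedge (\brp^{m+n}/\brp^{n-1})$. Applying the $k$-fold suspension functor to this homeomorphism gives
\[
\s^k\bigl(D(m,n)/D(m,n-1)\bigr) \approx \s^k\bigl(S^n \wedge (\brp^{m+n}/\brp^{n-1})\bigr) \approx \s^{n+k}(\brp^{n+m}/\brp^{n-1}),
\]
since smashing with $S^n$ is the same as taking the $n$-fold suspension. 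So the non-$W$-trivial space in the hypothesis is exactly the $k$-fold suspension of the cofiber of the inclusion $D(m,n-1)\hookrightarrow D(m,n)$.

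Next I would set up the cohomological comparison. Let $q\colon D(m,n)\to D(m,n)/D(m,n-1)$ be the quotient map, and consider its $k$-fold suspension $\s^k q$. The induced map on mod-$2$ cohomology,
\[
(\s^k q)^* \colon H^i\bigl(\s^k(D(m,n)/D(m,n-1));\bz_2\bigr) \longrightarrow H^i\bigl(\s^kD(m,n);\bz_2\bigr),
\]
should be injective in the relevant range of degrees. This is the step on which the whole argument hinges: I expect to read off injectivity from the explicit description of $H^*(D(m,n);\bz_2)=\bz_2[c,d]/(c^{m+1},d^{n+1})$ given in fact (3) of Section \ref{result}, noting that the classes coming from the top $d$-power $d^n$ (the ones that survive into the quotient $D(m,n)/D(m,n-1)$) inject under $q^*$, and that injectivity is preserved under suspension since the suspension isomorphism is natural.

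With injectivity in hand, the argument is the now-familiar pullback trick used repeatedly in Theorem \ref{main1} and Lemma \ref{stunted2}: if $\s^{n+k}(\brp^{n+m}/\brp^{n-1})$ is not $W$-trivial, there is a vector bundle $\zeta$ over it with $w_i(\zeta)\neq 0$ for some $i$; transporting $\zeta$ through the homeomorphism above and pulling back along $\s^k q$ produces a bundle $\xi=(\s^k q)^*\zeta$ over $\s^kD(m,n)$ with $w_i(\xi)=(\s^k q)^*w_i(\zeta)\neq 0$ by naturality of Stiefel-Whitney classes together with the injectivity just established. Hence $\s^kD(m,n)$ is not $W$-trivial. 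The main obstacle is pinning down the precise degrees in which $(\s^k q)^*$ is injective and checking that the nonzero class $w_i(\zeta)$ lands in that range; everything else is formal naturality.
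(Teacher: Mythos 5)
Your proposal is correct and follows essentially the same route as the paper: the paper likewise identifies $\s^k(D(m,n)/D(m,n-1))$ with $\s^{n+k}(\brp^{n+m}/\brp^{n-1})$ via $h_2$ and pulls bundles back along the suspended quotient map $\s^k\pi$. The injectivity you flag as the "main obstacle" in fact holds in all positive degrees, with no range condition to check: the paper deduces it from the surjectivity of $i^*\colon H^p(D(m,n);\bz_2)\to H^p(D(m,n-1);\bz_2)$ (immediate from the ring structure, since restricting merely kills $d^n$) together with the exact sequence of the pair, which is the clean way to finish your injectivity step.
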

\begin{proof}
Observe that the inclusion $i: D(m,n-1) \hookrightarrow D(m,n)$ induces a surjective map 
       \[i^*: H^p(D(m,n);\bz_2) \rightarrow H^p(D(m,n-1);\bz_2), \mbox{ for all } p,\] 
in cohomology group.  Thus 
        \[\pi^*:H^p(D(m,n)/D(m,n-1);\bz_2) \rightarrow H^p(D(m,n);\bz_2), \mbox{ for  } p> 0,\] is injective. Here \[\pi: D(m,n) \rightarrow D(m,n)/D(m,n-1) \approx \s^{n} (\brp^{n+m}/\brp^{n-1})\] is the quotient map. Thus the induced suspension morphism
        \[ \s^k\pi^*: H^p(\s^{n+k}( \brp^{n+m}/\brp^{n-1});\bz_2) \rightarrow H^p(\s^kD(m,n);\bz_2)\] is injective for $p> 0$. Hence if there is a vector bundle $\xi$ over 
$\s^{n+k}(\brp^{n+m}/\brp^{n-1})$ with $W(\xi) \neq 1$, then $W(\pi^*(\xi)) \neq 1$. This completes the proof of the Lemma.
\end{proof}

Now the proof of the Theorem \ref{main4} follows immediately from  Lemma \ref{stunted1} and Theorem 1.4 \cite{tanaka_2010}.

This completes the proof of the results stated in the introduction.  As noted earlier we still do not know whether $\s^3D(m,5)$ and $\s^5D(m,3)$  are $W$-trivial or not for all $m$. Though this can be answered in few cases, we do not have a complete picture. For example if $m = 2,3 \mbox{ or } 4$ then $\s^3D(m,5)$ is $W$-trivial by  Proposition \ref{odd} and Corollary 1.3 of \cite{tanaka_2010}. By similar argument we can say that if $m = 1,2,4,5 \mbox{ or } 6$ then $\s^5D(m,3)$ is $W$-trivial. We also have the following proposition.

\begin{Prop}\label{main5}
Let $n>1$ and $n \not \equiv 3 \pmod 4$. Then $\s^4D(1,n)$ is not $W$-trivial.
\end{Prop}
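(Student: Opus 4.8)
The plan is to exhibit a single real vector bundle over $\s^4D(1,n)$ whose top Stiefel--Whitney class is non-zero in the lowest available degree, and to obtain such a bundle from the fibre $\bcp^n$ by means of the $KO$-theoretic machinery recalled in Section \ref{pre}. First I would isolate the only class that can witness non-triviality. As $\s^4D(1,n)$ is a $4$-fold suspension, its reduced mod $2$ cohomology vanishes below degree $5$, so for any bundle $\xi$ the classes $w_1,\dots,w_4$ vanish for dimensional reasons, while $w_5,w_6,w_7$ vanish because the lowest non-vanishing Stiefel--Whitney class must occur in a degree that is a power of $2$. Since $m=1$ forces $c^2=0$ and $n\ge 2$ gives $d^2\ne 0$, we have $H^4(D(1,n);\bz_2)=\langle d^2\rangle$ and hence $H^8(\s^4D(1,n);\bz_2)\cong\bz_2$, generated by $\s^4 d^2$. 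Thus it suffices to construct one bundle $\xi$ with $w_8(\xi)=\s^4 d^2\ne 0$.

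Next I would push the problem onto the fibre. Writing $\wt{KO}(\s^4D(1,n))=\wt{KO}^{-4}(D(1,n))$ and using that $\wt{KO}^{-4}(\brp^1)=\wt{KO}^{-4}(S^1)=0$ (as $\pi_5 KO=0$), Theorem \ref{decomp} collapses to $\wt{KO}^{-4}(D(1,n))=\wt{KO}^{-4}(1,n)$. Let $i:\bcp^n=D(0,n)\hookrightarrow D(1,n)$ be the fibre inclusion, for which $i^*(d)=d'$. By naturality of $w$ and the fact that $(\s^4 i)^*(\s^4 d^2)=\s^4(d')^2\ne 0$, it is enough to find $\xi\in\wt{KO}^{-4}(1,n)$ whose fibre restriction $i^!\xi\in\wt{KO}^{-4}(\bcp^n)$ has non-zero $w_8$ over $\s^4\bcp^n$.

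The required class on the fibre is easy to name. Under Bott periodicity $\wt{KO}^{-4}(\bcp^n)\cong\wt{KSp}(\bcp^n)$, and under $\wt{KO}^{-4}(\bcp^n)=\wt{KO}(\s^4\bcp^n)$ the desuspension of $w_8$ is the mod $2$ reduction of the first symplectic Pontryagin class $q_1$. The symplectic line bundle $H\oplus\overline H$ over $\bcp^n$ has trivial determinant, hence is genuinely symplectic, and satisfies $q_1\equiv (d')^2\pmod 2$; therefore its image in $\wt{KO}(\s^4\bcp^n)$ carries $w_8=\s^4(d')^2\ne 0$ for every $n\ge 2$. So the entire problem reduces to whether this symplectic class lies in the image of $i^!$.

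The lifting question is governed by the long exact sequence \eqref{(m,n)} with $m=1$, in which the image of $i^!$ is the kernel of the connecting homomorphism $\delta$. I would compute $\delta([H\oplus\overline H])$ from the Fujii--Yasui description of $\wt{KO}^{-*}(\bcp^n)$ together with the conjugation data encoded in \eqref{(m,n)} through $h_1$, and show that it vanishes exactly when $n\not\equiv 3\pmod 4$. For those $n$ a lift $\xi$ exists, $w_8(\xi)=\s^4 d^2\ne 0$ by naturality, and $\s^4D(1,n)$ is not $W$-trivial. I expect this evaluation of $\delta$ --- tracking the indeterminacy contributed by $f^!\wt{KO}^{-5}(\bcp^n)$ and extracting the precise residue $n\bmod 4$ --- to be the main obstacle; it is exactly the information that the cohomological pushforward arguments behind Theorem \ref{main1} and Lemma \ref{stunted1} cannot reach, which is why $\s^4D(1,n)$ requires this separate $KO$-theoretic treatment.
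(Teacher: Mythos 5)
Your outline follows the same skeleton as the paper's proof: reduce to the summand $\wt{KO}^{-4}(1,n)$ of $\wt{KO}^{-4}(D(1,n))$, invoke the exact sequence \eqref{(m,n)} with $m=1$, and produce a bundle over $\s^4D(1,n)$ by lifting a $W$-nontrivial class from $\wt{KO}^{-4}(0,n)=\wt{KO}^{-4}(\bcp^n)$ through $i^!$. But the step that carries the entire content of the proposition --- showing that the class you want actually lies in $\operatorname{im}(i^!)=\ker(\delta)$ --- is not proved in your proposal; you explicitly defer it (``I would compute $\delta$ \dots and show that it vanishes exactly when $n\not\equiv 3\pmod 4$'') and call it the main obstacle. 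That is a genuine gap, not a routine verification: nothing in your outline engages the hypothesis $n\not\equiv 3\pmod 4$, it is only pattern-matched to the desired conclusion. The paper closes this gap without any element-wise evaluation of $\delta$, by a rank argument: since $\wt{KO}^{-4}(\s\bcp^n)=0$ (Fujii), $i^!$ is injective; for $n\not\equiv 3\pmod 4$ both $\wt{KO}^{-4}(1,n)$ and $\wt{KO}^{-4}(0,n)$ are free abelian of the same rank $r$ (Fujii--Yasui Theorem 3, together with Theorem \ref{odd_decomp} and the stunted projective space tables when $n$ is odd --- this is exactly where the congruence condition is used); hence $\ker(\delta)=\operatorname{im}(i^!)$ has full rank $r$, so $\delta$ has finite image inside the torsion-free group $\wt{KO}^{-3}(\s\bcp^n)\cong\bz^r$, forcing $\delta=0$ and $i^!$ to be an isomorphism. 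Once $i^!$ is onto, \emph{every} class on the fibre lifts, and there is nothing left to compute.

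Two further points. First, your explicit candidate on the fibre --- the symplectic bundle $H\oplus\overline H$ with $q_1\equiv (d')^2\pmod 2$, together with the assertion that the periodicity isomorphism $\wt{KSp}(\bcp^n)\cong\wt{KO}(\s^4\bcp^n)$ converts $q_1\bmod 2$ into $w_8$ --- is itself an unproven claim (such comparisons typically involve integral correction factors, so it needs an argument), and it is unnecessary: the paper simply quotes Tanaka's Theorem 1.5 that $\s^4\bcp^n$ is not $W$-trivial and lifts whatever bundle witnesses this, with naturality of $W$ under $i$ finishing the proof. Second, your worry about ``tracking the indeterminacy contributed by $f^!\wt{KO}^{-5}(\bcp^n)$'' is a red herring: to lift a class one only needs it to be in $\ker(\delta)$, and any preimage under $i^!$ serves equally well; no indeterminacy enters. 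If you replace your deferred computation of $\delta$ by the rank-counting argument above, your proof becomes essentially the paper's.
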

\begin{proof}

Consider the following long exact sequence, \eqref{(m,n)},
 \[\cdots \rightarrow \wt{KO}^{-4}(\s\bcp^{n}) \xrightarrow{f^!} \wt{KO}^{-4}(1,n) \xrightarrow{i^!} \wt{KO}^{-4}(0,n) \xrightarrow{\delta} \wt{KO}^{-3}(\s\bcp^{n}) \rightarrow \cdots.\]
 Since   $\wt{KO}^{-4}(\s\bcp^n) = 0$ \cite[Theorem 2]{fujii_proj}, the homomorphism \[i^!: \wt{KO}^{-4}(1,n) \longrightarrow \wt{KO}^{-4}(0,n)\] is a monomorphism. We shall first prove that $i^!$ is an isomorphism.
  
Depending upon whether $n$ is odd or even, we write $n = 2r$ or $2r+1$.  As $n \not \equiv 3 \pmod 4$ we have, by  Theorem 2 \cite{fujii_proj}, 
\[\wt{KO}^{-4}(\bcp^{n}) = \wt{KO}^{-4}(0,n) = \bz^r.\]
Now if $n = 2r+ 1$ then we have the decompostion,
 \[ \wt{KO}^{-4}(1,2r+1) = \wt{KO}^{-4}(1,2r) \oplus \wt{KO}^{-4}(\s^{2r+1}(\brp^{2r+2}/\brp^{2r})),\] by the Theorem \ref{odd_decomp}. We have $\wt{KO}^{-4}(\s^{2r+1}(\brp^{2r+2}/\brp^{2r})) = 0$ (refer Table (2) on p. 47 of \cite{fujiistunted}) and $\wt{KO}^{-4}(1,2r) = \bz^r$ \cite[Theorem 3]{fujiidold}. Therefore,
 \[\wt{KO}^{-4}(1,n) =  \bz^r.\] 
 Now putting all these values of $\wt{KO}$-groups in the above long exact sequence one can easily conclude that the homomorphism \[\delta :\wt{KO}^{-4}(0,n) \longrightarrow \wt{KO}^{-3}(\s\bcp^{n}) = \wt{KO}^{-4}(\bcp^n)\] is zero. Thus the homomorphism $i^!$ is an isomorphism. 
 
 As $\s^4\bcp^{n}$ is not $W$-trivial (Theorem 1.5 \cite{tanaka_2010}), there is a vector bundle $\xi \in \wt{KO}^{-4}(0,n) = \wt{KO}^{-4}(\bcp^{n})$ with   $W(\xi) \neq 1$. Thus there is a vector bundle over $\s^4D(1,n)$ with non-trivial Stiefel-Whitney class.  This completes the proof of the proposition.

\end{proof}

\begin{Rem}{\em
Note that there is no integer $s$ such that $5 \leq 2^s \leq \dim( \s^4D(1,1)) = 7$. Hence $\s^4D(1,1)$ is $W$-trivial.}
\end{Rem}

Following are the cases which we have not been able to settle: (i) $k = 3$ and $m = 8t+1$
(ii) $k = 4$ and $m =2$ (iii) $k = 5$ and $m = 8t+3$
(iv) $k = 6$ and $m = 8t + 4$ (v) $k = 7$ and $m = 8t + 5$
(vi) $k = 8$ and $m = 4,5 \mbox{ or }6$. 
In addition to these cases,  we also do not know whether $\s^kD(m,n)$ is $W$-trivial or not when $k$, $m$ and $n$ satisfy any one of the following condition: (i) $k = 3$, $n = 5$ and $m \geq 5$ (ii) $k = 5$, $n = 3$  and $ m \geq 7$. (iii) $k = 4$, $ m = 1$ and $ n \equiv  3\pmod 4$.

\subsection*{Acknowledgement.}
The author is grateful to Aniruddha Naolekar for several valuable discussions.  He also thanks him for his helpful comments and suggestions on the manuscript.

\end{document}